\numberwithin{equation}{section}
\theoremstyle{plain}
\newtheorem{theorem}{Theorem}[section]
\newtheorem{corollary}[theorem]{Corollary}
\theoremstyle{definition}
\newtheorem{remark}[theorem]{Remark}
\newtheorem{question}[theorem]{Question}
\newtheorem{claim}[theorem]{Claim}
\begin{document}

\title[Normal elements of completed group algebras over ${\rm SL}_3(\mathbb{Z}_p) $]
{Normal elements of completed group algebras over ${\rm SL}_3(\mathbb{Z}_p) $}

\author{Dong Han and Feng Wei}

\address{Han: School of Mathematics and Information Science, Henan Polytechnic University, Jiaozuo, 454000, P. R. China}

\email{lishe@hpu.edu.cn}

\address{Wei: School of Mathematics and Statistics, Beijing Institute of Technology,
Beijing, 100081, P. R. China}

\email{daoshuo@hotmail.com}\email{daoshuo@bit.edu.cn}

\begin{abstract}
Let $p$ be a prime integer and $\mathbb{Z}_p$ be the ring of
$p$-adic integers. By a purely computational approach we prove that each nonzero normal
element of a completed group algebra over the special linear group
${\rm SL}_3(\mathbb{Z}_p)$ is a unit.
This give a positive answer to an open question in \cite{WeiBian2} and
make up for an earlier mistake in \cite{WeiBian1} simultaneously.
\end{abstract}

\subjclass[2010]{20C07, 16S34, 20E18}

\keywords{Norma elements, completed group algebra}

\thanks{This work of the first author is supported by the Doctor Foundation of Henan Polytechnic University (B2010-21) and the Natural Science Research Program of Education Department of Henan Province (16A110031 and 15A110026).}

\date{\today}

\maketitle

\section{Introduction}
\label{xxsec1}

Let $p$ be a prime integer, and let $\mathbb{Z}_p$ denote the ring of
$p$-adic integers. A group $G$ is \textit{compact p-adic analytic}
if it is a topological group which has the structure of a $p$-adic
analytic manifold - that is, it has an atlas of open subsets of
$\mathbb{Z}^n_p$ , for some $n\geq 0$. Such groups can be
characterized in a more intrinsic way. A topological group $G$ is
compact $p$-adic analytic if and only if $G$ is a closed subgroup of
the general linear group ${\rm GL}_n(\mathbb{Z}_p)$ for some $n\geq
1$. In this paper we will consider the so-called \textit{completed
group algebras} of $G$
$$
\Lambda_G :=\varprojlim_{N\unlhd G} \mathbb{Z}_p[G/N],
$$
where the inverse limit is taken over the open normal subgroups $N$
of $G$. Closely related to $\Lambda_G$ is its epimorphic image
$\Omega_G$, which is defined as
$$
\Omega_G : =\varprojlim_{N\unlhd G} \mathbb{F}_p[G/N],
$$
where $\mathbb{F}_p$ is the finite field of $p$ elements. These algebras
with topological setting were defined and studied by Lazard in
his seminal paper \cite{Lazard} at first. They are complete semilocal
noetherian rings, which are in general noncommutative. Under the
name of Iwasawa algebras, these algebras are well-established and
have an increasing interest to number theorists, because of their
connections with number theory and arithmetic algebraic geometry.
On the other hand, it seems that explicit description, by generators and relations, of these algebras themselves
and its ideals were inaccessible. However, Serre's presentation of semi-simple algebras and Steinberg's
presentation of Chevalley groups \cite{Serre, Steinberg} make us believe that the objects
coming from semi-simple split groups have explicit presentation.
Indeed, for any odd prime $p$, Clozel in his paper \cite{Clozel1} gives explicit presentations for
the afore-mentioned two completed group algebra over
the first congruence subgroup of ${\rm SL}_2(\mathbb{Z}_p)$, which is
$\Gamma_1({\rm SL}_2(\mathbb{Z}_p))={\rm ker}({\rm SL}_2(\mathbb{Z}_p)\longrightarrow {\rm SL}_2(\mathbb{F}_p))$.
More recently, Ray \cite{Ray1, Ray2}  extended Clozel's work to the cases of semi-simple, simply
connected Chevalley groups over $\mathbb{Z}_p$  and pro-$p$ Iwahori subgroups of ${\rm GL}_n(\mathbb{Z}_p)$.

For completed group algebras or general noetherian algebras, we quite often focus on its two-sided ideals, especially its prime
ideals. Unfortunately, no much more information is provided with the ideal structure of noncommutative completed group algebras. Although
we have noted that central elements of $G$ and closed normal subgroups give rise to ideals, the lack of examples with respect to
ideals make us embarrass and is the most pressing problem in this topic. One natural question is:  is there a
mechanism for constructing ideals of completed group algebras which involves neither central elements nor closed normal subgroups ?
Recall that a uniform pro-$p$ group $G$ is \textit{almost simple} provided its Lie algebra has no non-trivial ideals . This is equivalent
to saying that every non-trivial closed normal subgroup of $G$ is open.  In \cite{Harris},
M. Harris claimed that, for an almost simple uniform pro-$p$ group $G$, any closed subgroup $H$ of $G$ with $2\, {\rm dim}\, H > {\rm dim}\, G$
gives rise to a non-zero two-sided ideal in $\Omega_G$, namely the annihilator of the ``Verma module"
constructed by induction from the simple $\Omega_H$-module. Unfortunately, Jordan Ellenberg observed that
the proof of the main theorem of \cite{Harris} contains a gap. We remind
the reader that $r\in \Omega_G$ is \textit{normal} if $r\Omega_G=\Omega_G r$. Another closely related question is:
For an almost simple uniform pro-$p$ group $G$, with $G\ncong \mathbb{Z}_p$, must any
nonzero normal element of $\Omega_G$ be a unit?
It was well-known that normal elements of associative algebras are
closely related to their ideals, especially their reflexive ideals. By a purely computational approach,
we prove that each nonzero normal element of the completed group algebra $\Omega_G$ over $\Gamma_1({\rm SL}_2(\mathbb{Z}_p))$
is a unit, see \cite[Theorem 9]{WeiBian1}. It is natural to describe the normal elements of the completed group
algebras over the special linear groups ${\rm SL}_3(\mathbb{Z}_p)$ and ${\rm SL}_n(\mathbb{Z}_p)$.
It is so pity that  the proof of \cite[Theorem 9]{WeiBian1} works at this point only for $G=\Gamma_1({\rm SL}_2(\mathbb{Z}_p))$
and for $\Omega_G$. We are sincerely grateful to Professor Dan Segal and Professor Stuart Mrgolis for drawing our
attention to an error in \cite{WeiBian1}. They inform us that similar statements for the completed group algebras of
the first congruence subgroups $\Gamma_1({\rm SL}_3(\mathbb{Z}_p))$ and $\Gamma_1({\rm SL}_n(\mathbb{Z}_p))$ can not be achieved by
analogous proofs of \cite[Theorem 9]{WeiBian1}.  In this situation, we must change the two statements---
\cite[Theorems 13 and 14]{WeiBian1}---into two open questions in \cite{WeiBian2}.

The purpose of this paper is to describe the normal elements of completed group algebras over the special
linear groups ${\rm SL}_3(\mathbb{Z}_p)$ by a purely computational method. Although we utilize some ideas of \cite{WeiBian1},
the adopted computational method in the current work is rather different from the original one.
We adjust and modify the original computational method considerably, see Claim 10 and Claim 11
of \cite{WeiBian1}, and Claim \ref{xxsec4.c1} and \ref{xxsec4.c5} of the current work. It
turns out that that each nonzero normal element of the completed group algebra $\Omega_G$ over
the first congruence subgroup $G=\Gamma_1({\rm SL}_3(\mathbb{Z}_p))$ is a unit.
This explicitly give a positive answer to the open question in \cite[Question 0.1]{WeiBian2} and
also make up for an earlier mistake in \cite[Theorem 13]{WeiBian1}.

The organization of this paper is as follows. After Introduction, we first recall some basic facts concerning
$p$-adic analytic groups ${\rm SL}_n(\mathbb{Z}_p)$ and its completed group algebras $\Omega_G$
in the Preliminaries. Section \ref{xxsec3} is contributed to complicated
computations of Lie brackets of topological generators of the completed group algebra $\Omega_G$ over
$G={\rm SL}_3(\mathbb{Z}_p)$. The proof of our main theorem (Theorem \ref{xxsec4.t1})
is given in Section \ref{xxsec4}. Some potential topics for further research are proposed in the last section.

\section{Preliminaries}
\label{xxsec2}

Let $n, t$ be positive integers. The \textit{$t$-th congruence subgroup in ${\rm SL}_n({\mathbb{Z}}_p)$}
is the kernel of the canonical epimorphism from ${\rm SL}_n({\mathbb{Z}}_p)$ to ${\rm
SL}_n({\mathbb{Z}}_p/p^t{\mathbb{Z}}_p)$. As usual, we denote it by $\Gamma_t({\rm SL}_n({\mathbb{Z}}_p))$.
It is easy to verify that $\Gamma_t({\rm SL}_n({\mathbb{Z}}_p))$ is a compact $p$-adic
analytic group. In the current work, we mainly investigate the completed
group algebra $\Omega_G$ of the \textit{first congruence
subgroup} $G=\Gamma_1({\rm SL}_n({\mathbb{Z}}_p))$ in ${\rm
SL}_n({\mathbb{Z}}_p)$ . We can fix a topological generating set for
$G$ as follows:

(1) Type of upper triangular matrix
$$
x_{ij}=\left [
\begin{array}{ccccccc}
1 &  &  &  &  &  & \\
  & \ddots  &  &  &  &  & \\
  &  & 1  & \cdots &  p \\
  &  &  & \ddots & \vdots &  & \\
  &  &  &  & 1 &  & \\
  &  &  &  &  & \ddots & \\
  &  &  &  &  &  & 1\\
\end{array}\right ](i<j),
$$
where the entry of $x_{ij}$ in the $i$-th row and $j$-th column is
$p$.

(2) Type of diagonal matrix
$$
x_{iijj}= \left[
\begin{array}{cccccccccc}
1 &  &  &  &  &  & & & &\\
  & \ddots  &  &  &  &  & & & &\\
  &  & 1  & &  & & & & &\\
  &  &  & \ddots & &  & & & &\\
  &  &  &  & 1+p &  & & & &\\
  &  &  &  &  & (1+p)^{-1} & & & &\\
  &  &  &  &  &  & \ddots  & & &\\
  &  &  &  &  &  & &  1 & &\\
  &  &  &  &  &  & &  &  \ddots &\\
  &  &  &  &  &  & &  & & 1\\
\end{array}\right](i=j-1),
$$
where the entry of $x_{iijj}$ in the $i$-th row and $i$-th column is
$1+p$ and the entry of $x_{iijj}$ in the $j$-th row and $j$-th
column is $(1+p)^{-1}$.

 (3) Type of lower triangular matrix
$$
x_{ij}=\left[
\begin{array}{ccccccc}
1 &  &  &  &  &  & \\
  & \ddots  &  &  &  &  & \\
  &  &  1  & & & & \\
  &  &  \vdots & \ddots  & &  & \\
  &  &  p &  \cdots & 1 &  & \\
  &  &  &  &  & \ddots & \\
  &  &  &  &  &  & 1\\
\end{array}\right](i>j),
$$
where the entry of $x_{ij}$ in the $i$-th row and $j$-th column is
$p$.

It is not difficult to verify that the number of topological generators for $G=\Gamma_1({\rm SL}_n({\mathbb{Z}}_p))$ is ${n^2-1}$. When
certain complicated computations are involved, the type and number of topological generators will be useful. It follows from the
discussion of \cite[\S 7.1]{DixonSautoyMannSegal} that the ordinary group algebra
${\mathbb{F}}_p[G]$ can embed into $\Omega_G$. For $i=1,2, \cdots n,
j=1, 2,\cdots, n$, let us set
$$
y_{ij}=x_{ij}-1(i< j), \ \ \ y_{iijj}=x_{iijj}-1(i=j-1), \ \ \ y_{ij}=x_{ij}-1(i> j),
$$
then $y_{ij}(i<j), y_{iijj}(i=j-1), y_{ij}(i>j)\in
{\mathbb{F}}_p[G]\subseteq \Omega_G$. Thus we can produce various
monomials in the $y_{ij}(i<j), y_{iijj}(i=j-1), y_{ij}(i>j)$: if
$\alpha=(\alpha_{12}, \cdots, \alpha_{1n}, \alpha_{23}, \cdots,
\alpha_{2n}, \cdots, \alpha_{(n-1)n}, \alpha_{1122}, \cdots, $
$\alpha_{(n-1)(n-1)nn}, \alpha_{21}, \alpha_{31}, \alpha_{32},
\cdots, \\ \alpha_{n1}, \cdots, \alpha_{n(n-1)})$ is a $(n^2$
$-1)$-tuple of nonnegative integers, we define
\begin{eqnarray}
&{\rm {\bf{y}}}^\alpha=y_{12}^{\alpha_{12}} \cdots
y_{1n}^{\alpha_{1n}}y_{23}^{\alpha_{23}} \cdots y_{2n}^{\alpha_{2n}}
\cdots y_{(n-1)n}^{\alpha_{(n-1)n}} y_{1122}^{\alpha_{1122}} \cdots\nonumber\\
&{\phantom{aaaaaaaaaaaaa}} y_{(n-1)(n-1)nn}^{\alpha_{(n-1)(n-1)nn}}y_{21}^{\alpha_{21}} y_{31}^{\alpha_{31}}y_{32}^{\alpha_{32}} \cdots
y_{n1}^{\alpha_{n1}} \cdots y_{n(n-1)}^{\alpha_{n(n-1)}}\in
\Omega_G\nonumber.
\end{eqnarray}
It should be remarked that the expressions of these monomials depend
on our choice of ordering of the $y_{ij}$'s$(i<j)$,
$y_{iijj}$'s$(i=j-1)$ , $y_{ij}$'s$(i>j)$, because $\Omega_G$ is
noncommutative unless $G$ is abelian. The following result shows
that $\Omega_G$ is a ``noncommutative formal power series ring".

\begin{theorem}\label{xxsec0.1}{\rm \cite[Theorem 7.23]{DixonSautoyMannSegal}}
Every element $r$ of $\Omega_G$ is equal
to the sum of a uniquely determined convergent series
$$
r=\sum_{\alpha \in {\mathbb{N}}^{n^2-1}} r_\alpha {\rm
{\bf{y}}}^\alpha,
$$
where $r_\alpha\in {\mathbb{F}}_p$ for all $\alpha\in
{\mathbb{N}}^{n^2-1}$.
\end{theorem}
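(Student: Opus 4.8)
The plan is to reduce the statement to the structure of the graded ring associated with the augmentation filtration on $\Omega_G$, and then to lift that structure by a routine successive-approximation argument. Let $I \trianglelefteq \Omega_G$ be the augmentation ideal, i.e.\ the kernel of the continuous $\mathbb{F}_p$-algebra map $\Omega_G \to \mathbb{F}_p$ sending every element of $G$ to $1$, and consider the descending chain of closed ideals $\Omega_G \supseteq I \supseteq I^2 \supseteq \cdots$. Because $G = \Gamma_1(\mathrm{SL}_n(\mathbb{Z}_p))$ is a torsion-free (indeed uniform) pro-$p$ group, $\Omega_G$ is Hausdorff and complete for the $I$-adic topology, which moreover agrees with its profinite topology; in particular $\bigcap_{m\ge 0} I^m = 0$, and every $I$-adically Cauchy sequence converges to a unique limit. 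Each generator difference $y_{ij}$ and $y_{iijj}$ lies in $I$, so a monomial $\mathbf{y}^\alpha$ lies in $I^{|\alpha|}$, where $|\alpha|$ denotes the total exponent. Consequently, for any choice of scalars $(r_\alpha)_{\alpha \in \mathbb{N}^{n^2-1}}$ in $\mathbb{F}_p$ the series $\sum_\alpha r_\alpha \mathbf{y}^\alpha$ is automatically $I$-adically convergent: only finitely many $\alpha$ have a given value of $|\alpha|$, so the partial sums form a Cauchy sequence.

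The crux is to show that the associated graded ring $\mathrm{gr}_I(\Omega_G) := \bigoplus_{m\ge 0} I^m/I^{m+1}$ is the polynomial $\mathbb{F}_p$-algebra on the $n^2-1$ degree-one symbols $\overline{y_{ij}}$ $(i<j)$, $\overline{y_{iijj}}$ $(i = j-1)$, $\overline{y_{ij}}$ $(i>j)$. This is the one place where the uniform structure of $G$, rather than generalities about profinite rings, is indispensable: it is precisely Lazard's computation (equivalently, the reduction modulo $p$ of the topological $\mathbb{Z}_p$-module isomorphism $\mathbb{Z}_p[[G]] \cong \mathbb{Z}_p[[X_1,\dots,X_{n^2-1}]]$) that these symbols generate $\mathrm{gr}_I(\Omega_G)$ freely, with no unexpected relations. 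The apparent obstacle posed by noncommutativity of $\Omega_G$ evaporates in the associated graded, since for uniform $G$ any commutator $[x_a,x_b]$ of topological generators is a $p$-th power in $G$, whence $[y_a,y_b] = y_ay_b - y_by_a$ lies in $I^p$, which (for the uniform group at hand) sits strictly deeper than the filtration step $I^2$ containing $y_ay_b$; thus in $\mathrm{gr}_I(\Omega_G)$ the degree-one symbols commute, and reordering of the factors in $\mathbf{y}^\alpha$ leaves the image in $I^{|\alpha|}/I^{|\alpha|+1}$ unchanged. Granting this, the degree-$m$ homogeneous component $I^m/I^{m+1}$ has as an $\mathbb{F}_p$-basis exactly the images of the monomials $\mathbf{y}^\alpha$ with $|\alpha| = m$.

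From the graded statement, both uniqueness and existence of the expansion drop out. For \emph{uniqueness}: if $\sum_\alpha r_\alpha \mathbf{y}^\alpha = 0$ with some $r_\alpha \ne 0$, choose $m$ minimal with $r_\alpha \ne 0$ for some $|\alpha| = m$; the series then lies in $I^m$, and its image in $I^m/I^{m+1}$ is the nonzero element $\sum_{|\alpha|=m} r_\alpha \overline{\mathbf{y}^\alpha}$ of the free module --- a contradiction. For \emph{existence}: given $r \in \Omega_G$, the image of $r$ in $\Omega_G/I \cong \mathbb{F}_p$ is the scalar $r_0$; then $r - r_0 \in I$, and (using the graded basis in degree $1$) there are unique scalars $r_\alpha$ with $|\alpha| = 1$ such that $r - r_0 - \sum_{|\alpha|=1} r_\alpha \mathbf{y}^\alpha \in I^2$. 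Iterating, at the $m$-th stage one subtracts the unique $\mathbb{F}_p$-combination of the degree-$m$ monomials that pushes the remainder into $I^{m+1}$; the partial sums $s_m = \sum_{|\alpha|\le m} r_\alpha \mathbf{y}^\alpha$ satisfy $r - s_m \in I^{m+1}$, hence form a Cauchy sequence, and by completeness together with $\bigcap_m I^m = 0$ they converge precisely to $r$.

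I expect the genuine difficulty to be concentrated entirely in the middle step: establishing that $\mathrm{gr}_I(\Omega_G)$ is the polynomial ring on the $n^2-1$ symbols, with no hidden relations. Everything else --- completeness, the automatic convergence of series supported in $I$, and the bookkeeping of the approximation scheme --- is formal manipulation in a complete Hausdorff filtered ring. The polynomiality of the associated graded is where one must call on the quantitative theory of $p$-valuations on uniform pro-$p$ groups (Lazard), and it is a property special to uniform groups; the explicit matrix generators $x_{ij}, x_{iijj}$ exhibited above serve only to pin down a concrete index set, and the specific group $\mathrm{SL}_n$ plays no further role at this level of generality.
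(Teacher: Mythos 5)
The paper offers no proof of this statement at all: it is quoted verbatim from Dixon--du~Sautoy--Mann--Segal \cite[Theorem 7.23]{DixonSautoyMannSegal}, so the comparison has to be with the proof given there. Your route is genuinely different. The DDMS argument works entirely with the finite quotients $\mathbb{F}_p[G/G_{k+1}]$ along the lower $p$-series: one shows by induction on the uniform structure that the monomials ${\bf y}^{\alpha}$ with all exponents $<p^k$ span $\mathbb{F}_p[G/G_{k+1}]$, observes that their number equals $p^{dk}=|G/G_{k+1}|=\dim_{\mathbb{F}_p}\mathbb{F}_p[G/G_{k+1}]$ (with $d=n^2-1$), so that linear independence comes for free from a dimension count, and then passes to the inverse limit. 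Your argument instead filters $\Omega_G$ by powers of the augmentation ideal $I$, establishes that $\operatorname{gr}_I\Omega_G$ is the commutative polynomial ring on the $d$ degree-one symbols, and recovers existence and uniqueness of the expansion by successive approximation in the complete filtered ring. Both are standard; yours is cleaner conceptually and makes transparent exactly where uniformity enters, while the DDMS counting trick is more elementary and avoids having to identify the graded ring at all.

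Two cautions. First, the step you correctly isolate as the crux --- that $\operatorname{gr}_I\Omega_G$ is a \emph{free} polynomial algebra, with no relations beyond commutativity --- is essentially the theorem in disguise, and in most references (including DDMS, Theorem 7.24ff.) it is \emph{deduced from} the unique-expansion theorem rather than proved independently. If you invoke it you must take it from a source that proves it directly from Lazard's $p$-valuation machinery (e.g.\ via the isomorphism of $\operatorname{gr}\Omega_G$ with the symmetric algebra of $\operatorname{gr}G\otimes\mathbb{F}_p$), or your proof is circular; the spanning half is easy, but the linear independence in each graded piece is exactly the content being established. Second, your commutativity argument puts $[y_a,y_b]=x_ax_b-x_bx_a\in I^p$ because $[x_a,x_b]$ is a $p$-th power in the uniform group and $z^p-1=(z-1)^p$ in characteristic $p$; this forces the symbols to commute in $\operatorname{gr}_I$ only when $p>2$, since for $p=2$ one gets $I^p=I^2$ and nothing is gained. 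That is consistent with the paper's standing hypothesis that $p$ is odd, but it should be said explicitly, as the theorem is stated in the Preliminaries for general $\Gamma_1(\mathrm{SL}_n(\mathbb{Z}_p))$.
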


As a direct consequence of this result we have

\begin{corollary}\label{xxsec0.2}
The Jacobson radical $J$ of $\Omega_G$ is equal to
\begin{eqnarray}
&J=y_{12} \Omega_G+ \cdots + y_{1n}\Omega_G +y_{21} \Omega_G +
\cdots+ y_{2n}\Omega_G+ \cdots+y_{(n-1)n}\Omega_G\nonumber\\
&{\phantom{aaaaaaaaa}} +y_{1122}\Omega_G+\cdots+y_{(n-1)(n-1)nn}\Omega_G+y_{21}\Omega_G
+y_{31}\Omega_G+y_{32}\Omega_G+\cdots\nonumber\\
&
+y_{n1}\Omega_G+\cdots+y_{n(n-1)}\Omega_G\nonumber\\
&
\hspace{10pt}=\Omega_Gy_{12}+ \cdots + \Omega_Gy_{1n}
+\Omega_Gy_{21}+ \cdots+ \Omega_Gy_{2n}+ \cdots+\Omega_Gy_{(n-1)n}\nonumber\\
&{\phantom{aaaaaaaaa}}+\Omega_Gy_{1122}+\cdots+\Omega_Gy_{(n-1)(n-1)nn}+\Omega_G y_{21}
+\Omega_Gy_{31}+\Omega_Gy_{32}+\cdots\nonumber\\
&
+\Omega_Gy_{n1}+\cdots+\Omega_Gy_{n(n-1)}.\nonumber
\end{eqnarray}
Moreover, $\Omega_G/J\cong {\mathbb{F}}_p$.
\end{corollary}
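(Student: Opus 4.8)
The plan is to realize the ideal in the statement as the kernel of the augmentation map and then verify directly that $\Omega_G$ is local with that kernel as its maximal ideal.

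First I would introduce the augmentation $\varepsilon\colon\Omega_G\to\mathbb{F}_p$ as the inverse limit of the ordinary augmentations $\mathbb{F}_p[G/N]\to\mathbb{F}_p$, $g\mapsto 1$; these are surjective ring homomorphisms compatible with the transition maps of the system $\Omega_G=\varprojlim_N\mathbb{F}_p[G/N]$, so $\varepsilon$ is a continuous surjective ring homomorphism. Since $\varepsilon(y_{ij})=\varepsilon(x_{ij}-1)=0$ for every triangular generator and $\varepsilon(y_{iijj})=0$ for every diagonal generator, continuity of $\varepsilon$ together with Theorem~\ref{xxsec0.1} gives $\varepsilon\!\left(\sum_{\alpha}r_\alpha\mathbf{y}^{\alpha}\right)=r_0$. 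Hence $I:=\ker\varepsilon$ is precisely the set of series with zero constant term, and $\Omega_G/I\cong\mathbb{F}_p$.

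Next I would show that $I$ coincides with both of the displayed sums. The inclusions of $\sum_{ij}y_{ij}\Omega_G+\sum y_{iijj}\Omega_G$ and of $\sum_{ij}\Omega_G y_{ij}+\sum \Omega_G y_{iijj}$ into $I$ are immediate, since $\varepsilon$ kills every generator. For the reverse inclusions, take $r=\sum_\alpha r_\alpha\mathbf{y}^{\alpha}$ with $r_0=0$: every monomial occurring has $|\alpha|\ge 1$, so in the fixed ordering it begins with one of the $n^2-1$ generators. Grouping the series according to the first letter of each monomial expresses $r$ as a \emph{finite} sum $\sum_{y} y\cdot r^{(y)}$ with each $r^{(y)}\in\Omega_G$; grouping instead according to the last letter expresses $r$ as $\sum_{y} r^{[y]}\cdot y$. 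This proves both equalities and shows in particular that $I$ is a closed two-sided ideal.

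Finally I would prove $I=J$. Writing $u\in I$ as $(u_N)_N$ under $\Omega_G=\varprojlim_N\mathbb{F}_p[G/N]$, each $u_N$ lies in the augmentation ideal of $\mathbb{F}_p[G/N]$, which is nilpotent because $G/N$ is a finite $p$-group; hence $1-u_N$ is a unit, the inverses $(1-u_N)^{-1}$ are compatible under the transition maps, and so $1-u\in\Omega_G^\times$. Thus $1+I\subseteq\Omega_G^\times$. Conversely, if $r\notin I$ then $c:=\varepsilon(r)\in\mathbb{F}_p^\times$ and $r=c\bigl(1+c^{-1}(r-c)\bigr)$ with $c^{-1}(r-c)\in I$, so $r\in\Omega_G^\times$; and no element of $I$ is a unit, since $\varepsilon$ would otherwise send $1$ to $0$. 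Therefore the non-units of $\Omega_G$ are exactly the elements of $I$, so $\Omega_G$ is local with maximal ideal $I$, whence $J=I$ and $\Omega_G/J\cong\mathbb{F}_p$. The only point requiring care is the rearrangement of the convergent series in the identification of $I$ with the displayed sums: all $n^2-1$ generators must be collected at once, and one must invoke the uniqueness of the monomial expansion in Theorem~\ref{xxsec0.1} to be sure the grouped subseries again lie in $\Omega_G$ and sum back to $r$; the rest is formal.
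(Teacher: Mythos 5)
Your proof is correct and is exactly the standard argument that the paper leaves implicit (it states the corollary as "a direct consequence" of Theorem~\ref{xxsec0.1} with no written proof): identify the kernel of the augmentation with the two displayed sums via the monomial expansion, and show $\Omega_G$ is local with that maximal ideal using nilpotence of the augmentation ideals of the finite $p$-group quotients $\mathbb{F}_p[G/N]$. The point you flag about regrouping the convergent series by first (resp.\ last) letter is indeed the only place where Theorem~\ref{xxsec0.1} is genuinely needed, and you handle it correctly.
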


Theorem \ref{xxsec0.1} implies that the monomials $\{{\rm
{\bf{y}}}^\alpha: \alpha\in {\mathbb{N}}^{n^2-1}\}$ form a
topological basis for $\Omega_G$ and is thus analogous to the
classical Poincar$\acute{\rm e}$-Birkhoff-Witt theorem for Lie
algebras $\mathfrak{g}$ over a field $k$ which gives a vector space
basis for the enveloping algebra ${\mathcal{U}}(\mathfrak{g})$ in
terms of monomials in a fixed basis for $\mathfrak{g}$ \cite{Dixmier}.
Some explicit computations in $\Omega_G$ are much more difficult
than those in ${\mathcal{U}}(\mathfrak{g})$, which will be seen in
the sequel.

\section{  Lie Brackets of Generators of the Completed Group Algebra}\label{xxsec3}

We shall consider the normal elements of the completed group algebra $\Omega_G$  with $G =\Gamma_1({\rm SL}_3(\mathbb{Z}_p))$.
For this we need to discuss the Lie bracket of generators for the ordinary group algebra $\mathbb{F}_p[G]$.
Although part of them have been presented in \cite{WeiBian1} , it is indispensable for our later discussion.
Now we briefly sketch the relevant contents for the convenience of the reader.

\begin{theorem}\label{xxsec2.t1}
Let p be an odd prime number and
$$
\begin{aligned}
 &x_{12}=\left[
        \begin{array}{ccc}
          1 & p & 0 \\
          0 & 1 & 0 \\
          0 & 0 & 1 \\
        \end{array}
      \right]
 ,x_{13}=\left[
        \begin{array}{ccc}
          1 & 0 & p \\
          0 & 1 & 0 \\
          0 & 0 & 1 \\
        \end{array}
      \right],x_{23}=\left[
        \begin{array}{ccc}
          1 & 0 & 0 \\
          0 & 1 & p \\
          0 & 0 & 1 \\
        \end{array}
      \right],\\
& x_{1122}=\left[
        \begin{array}{ccc}
          1+p & 0 & 0 \\
          0 & (1+p)^{-1} & 0 \\
          0 & 0 & 1 \\
        \end{array}
      \right]
 ,x_{2233}=\left[
        \begin{array}{ccc}
          1 & 0 & 0 \\
          0 & 1+p & 0 \\
          0 & 0 & (1+p)^{-1} \\
        \end{array}
      \right],\\
      &x_{21}=\left[
        \begin{array}{ccc}
          1 & 0 & 0 \\
          p & 1 & 0 \\
          0 & 0 & 1 \\
        \end{array}
      \right]
 ,x_{31}=\left[
        \begin{array}{ccc}
          1 & 0 & 0 \\
          0 & 1 & 0 \\
          p & 0 & 1 \\
        \end{array}
      \right],x_{32}=\left[
        \begin{array}{ccc}
          1 & 0 & 0 \\
          0 & 1 & 0 \\
          0 & p & 1 \\
        \end{array}
      \right]
\end{aligned}
$$
be a topological generating set for  $G =\Gamma_1({\rm SL}_3(\mathbb{Z}_p))$ and $y_{ij} = x_{ij} - 1(i < j)$,
$y_{iijj} = x_{iijj} - 1(i = j - 1)$, $y_{ij}= x_{ij}- 1(i > j)$. Then for any nonnegative integers $r$ and $s$, we have
$$\textcircled{1}\ [y^{p^r}_{12}, y^{p^s}_{13}] = [y^{p^r}_{13}, y^{p^s}_{23}] = 0,$$
$$\textcircled{2}\  [y^{p^r}_{12}, y^{p^s}_{23}]= (1+y^{p^r}_{12} )[1 -  (1 + y^{p^{r+s+1}}_{13} )^{- 1}](1 + y^{p^s}_{23});$$
$$\textcircled{3}\  [y^{p^r}_{1122}, y^{p^s}_{2233}]= 0;$$
$$\textcircled{4}\  [y^{p^r}_{21}, y^{p^s}_{31}] = [y^{p^r}_{31}, y^{p^s}_{32}] = 0,$$
$$\textcircled{5}\  [y^{p^r}_{21}, y^{p^s}_{32}]= (1+y^{p^r}_{21} )[1 -  (1 + y^{p^{r+s+1}}_{31} )](1 + y^{p^s}_{32});$$
$$\textcircled{6}\  [y^{p^r}_{12}, y^{p^s}_{1122}]= (1+y^{p^r}_{12} )[1 -  (1 + y^{p^r}_{12} )^{(1+p)^{2p^s}-1}](1 + y^{p^s}_{1122});$$
$$\textcircled{7}\  [y^{p^r}_{13}, y^{p^s}_{1122}]= (1+y^{p^r}_{13} )[1 -  (1 + y^{p^r}_{13} )^{(1+p)^{p^s}-1}](1 + y^{p^s}_{1122});$$
$$\textcircled{8}\  [y^{p^r}_{23}, y^{p^s}_{1122}]= (1+y^{p^r}_{23} )[1 -  (1 + y^{p^r}_{23} )^{(1+p)^{-p^s}-1}](1 + y^{p^s}_{1122});$$
$$\textcircled{9}\  [y^{p^r}_{12}, y^{p^s}_{2233}]= (1+y^{p^r}_{12} )[1 -  (1 + y^{p^r}_{12} )^{(1+p)^{-p^s}-1}](1 + y^{p^s}_{2233});$$
$$\textcircled{\footnotesize{10}}\  [y^{p^r}_{13}, y^{p^s}_{2233}]= (1+y^{p^r}_{13} )[1 -  (1 + y^{p^r}_{13} )^{(1+p)^{p^s}-1}](1 + y^{p^s}_{2233});$$
$$\textcircled{\footnotesize{11}}\  [y^{p^r}_{23}, y^{p^s}_{2233}]= (1+y^{p^r}_{23} )[1 -  (1 + y^{p^r}_{23} )^{(1+p)^{2p^s}-1}](1 + y^{p^s}_{2233});$$
$$
\begin{aligned}
& \textcircled{\footnotesize{12}}\  [y^{p^r}_{12}, y^{p^s}_{21}]=(1+y^{p^r}_{12} )[1- (1 + y^{p^{2r+s+2}}_{12} )^{-(1+p^{r+s+2})^{-1}}(1 + y_{1122})^{\beta}\\
 &\ \ \ \ \ \ \ \ \ \ \ \ \ \ \ \ \ \times(1 +y^{p^{r+2s+2}}_{21} )^{-(1+p^{r+s+2})^{-1}}](1 + y^{p^s}_{21}),
\end{aligned}
$$
where $\beta=\sum_{k=0}^{\infty} \beta_kp^k\in \mathbb{Z}_p$, and $\beta_k\in \mathbb{Z} $, $0\leq\beta_k\leq p-1$ satisfying
$ \beta_0 = \beta_1=\cdots = \beta_{r+s}=0$, $\beta_{r+s+1}=p-1$, $\beta_{r+s+2}=(\frac{p^{r+s+1}-1}{2}-1)\mod p$, $\cdots ;$

$$\textcircled{\footnotesize{13}}\  [y^{p^r}_{13}, y^{p^s}_{21}]= (1+y^{p^r}_{21} )[1 -  (1 + y^{p^{r+s+1}}_{23} ) ](1 + y^{p^s}_{21});\ \ [y^{p^r}_{23}, y^{p^s}_{21}]=0;$$
$$\textcircled{\footnotesize{14}}\  [y^{p^r}_{12}, y^{p^s}_{31}]= (1+y^{p^r}_{12} )[1 -  (1 + y^{p^{r+s+1}}_{32} ) ](1 + y^{p^s}_{31});$$
$$
\begin{aligned}
&\textcircled{\footnotesize{15}}\  [y^{p^r}_{13}, y^{p^s}_{31}]=(1+y^{p^r}_{13} )[1- (1 + y^{p^{2r+s+2}}_{13} )^{-(1+p^{r+s+2})^{-1}}(1 + y_{1122})^{\beta}(1 + y_{2233})^{\beta}\\
 &\ \ \ \ \ \ \ \ \ \ \ \ \ \ \ \ \ \times(1 +y^{p^{r+2s+2}}_{31} )^{-(1+p^{r+s+2})^{-1}}](1 + y^{p^s}_{31}),
\end{aligned}
$$
where $\beta$ is as above;
$$\textcircled{\footnotesize{16}}\  [y^{p^r}_{23}, y^{p^s}_{31}]= (1+y^{p^r}_{23} )[1 -  (1 + y^{p^{r+s+1}}_{21} )^{- 1}](1 + y^{p^s}_{31});\ \ [y^{p^r}_{12}, y^{p^s}_{32}]=0;$$
$$\textcircled{\footnotesize{17}}\  [y^{p^r}_{13}, y^{p^s}_{32}]= (1+y^{p^r}_{13} )[1 -  (1 + y^{p^{r+s+1}}_{12} )^{- 1}](1 + y^{p^s}_{32});$$
$$
\begin{aligned}
& \textcircled{\footnotesize{18}}\  [y^{p^r}_{23}, y^{p^s}_{32}]=(1+y^{p^r}_{23} )[1- (1 + y^{p^{2r+s+2}}_{23} )^{-(1+p^{r+s+2})^{-1}}(1 + y_{2233})^{\beta}\\
 &\ \ \ \ \ \ \ \ \ \ \ \ \ \ \ \ \ \times(1 +y^{p^{r+2s+2}}_{32} )^{-(1+p^{r+s+2})^{-1}}](1 + y^{p^s}_{32}),
\end{aligned}
$$
where $\beta$ is as above;
$$\textcircled{\footnotesize{19}}\  [y^{p^r}_{1122}, y^{p^s}_{21}]= (1+y^{p^r}_{1122} )[1 -  (1 + y^{p^s}_{21} )^{(1+p)^{2p^r-1}}](1 + y^{p^s}_{21});$$
$$\textcircled{\footnotesize{20}}\  [y^{p^r}_{1122}, y^{p^s}_{31}]= (1+y^{p^r}_{1122} )[1 -  (1 + y^{p^s}_{31} )^{(1+p)^{p^r}-1}](1 + y^{p^s}_{31});$$
$$\textcircled{\footnotesize{21}}\  [y^{p^r}_{1122}, y^{p^s}_{32}]= (1+y^{p^r}_{1122} )[1 -  (1 + y^{p^s}_{32} )^{(1+p)^{-p^r}-1}](1 + y^{p^s}_{32});$$
$$\textcircled{\footnotesize{22}}\  [y^{p^r}_{2233}, y^{p^s}_{21}]= (1+y^{p^r}_{2233} )[1 -  (1 + y^{p^s}_{21} )^{(1+p)^{-p^r}-1}](1 + y^{p^s}_{21});$$
$$\textcircled{\footnotesize{23}}\ [y^{p^r}_{2233}, y^{p^s}_{31}]= (1+y^{p^r}_{2233} )[1 -  (1 + y^{p^s}_{31} )^{(1+p)^{p^r}-1}](1 + y^{p^s}_{31});$$
$$\textcircled{\footnotesize{24}}\ [y^{p^r}_{2233}, y^{p^s}_{32}]= (1+y^{p^r}_{2233} )[1 -  (1 + y^{p^s}_{32} )^{(1+p)^{2p^r}-1}](1 + y^{p^s}_{32}).$$

\end{theorem}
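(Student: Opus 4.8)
The plan is to prove all twenty‑four identities by a single uniform reduction: turn each Lie bracket into an honest multiplicative commutator of matrices in $\mathrm{SL}_3(\mathbb{Z}_p)$, compute that commutator by hand, and read the result back in the $y$‑coordinates. Since $\mathbb{F}_p[G]\subseteq\Omega_G$ and, for $p$ odd, $(x-1)^{p^r}=x^{p^r}-1$ in $\mathbb{F}_p[G]$ for every group element $x$, one has $1+y_{ij}^{p^r}=x_{ij}^{p^r}$ (and likewise for the toral generators), so for any two of our generators $a,b$ and all $r,s\ge 0$,
$$
[\,y_a^{p^r},\,y_b^{p^s}\,]\;=\;a^{p^r}b^{p^s}-b^{p^s}a^{p^r}\;=\;a^{p^r}\,(1-c)\,b^{p^s},
\qquad c:=a^{-p^r}b^{p^s}a^{p^r}b^{-p^s}\in G ;
$$
the whole problem becomes: identify the matrix $c$, then recognise $1-c$ inside $\Omega_G$. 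Two closed forms make this routine. Every unipotent generator has the shape $I+pE$ with $E^{2}=0$ (each $E_{ij}$, $i\neq j$, is nilpotent of order two among $3\times3$ matrices), hence $a^{p^r}=I+p^{r+1}E$ and, more generally, $(I+p^{m}E)^{t}=I+tp^{m}E$ for every $t\in\mathbb{Z}_p$; this is exactly what gives meaning to exponents such as $(1+p)^{p^s}-1$ or $-(1+p^{r+s+2})^{-1}$. And $x_{1122}^{t}=\mathrm{diag}\!\bigl((1+p)^{t},(1+p)^{-t},1\bigr)$, $x_{2233}^{t}=\mathrm{diag}\!\bigl(1,(1+p)^{t},(1+p)^{-t}\bigr)$ for all $t\in\mathbb{Z}_p$, the series $(1+p)^{t}=\exp(t\log(1+p))$ converging because $p$ is odd.

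I would then group the identities by the shape of the pair $(a,b)$. The \emph{non‑interacting} pairs --- two toral generators, or two unipotents whose elementary matrices annihilate each other in both orders --- give $c=I$ at once, hence $\textcircled{1}$, $\textcircled{3}$, $\textcircled{4}$ and the vanishing halves of $\textcircled{\footnotesize{13}}$ and $\textcircled{\footnotesize{16}}$. For the \emph{composable} unipotent pairs (e.g.\ $x_{12},x_{23}$ or $x_{13},x_{21}$), whose elementary matrices have a nonzero product in exactly one order, one conjugation shows $c$ is a single elementary unipotent $x_{mn}^{\pm p^{r+s+1}}$ in the third position, the power and sign being dictated by the computation; this yields $\textcircled{2}$, $\textcircled{5}$, $\textcircled{\footnotesize{13}}$, $\textcircled{\footnotesize{14}}$, $\textcircled{\footnotesize{16}}$, $\textcircled{\footnotesize{17}}$. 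For the \emph{toral--unipotent} pairs, conjugating $I+p^{m}E_{uv}$ by $\mathrm{diag}(t_1,t_2,t_3)$ rescales it to $I+p^{m}(t_u/t_v)E_{uv}$, so $c=I+p^{m}\bigl((1+p)^{\pm e}-1\bigr)E_{uv}$ and $1-c=1-(1+y)^{\pm e}$ with $e$ the appropriate power of $(1+p)$ read off from the diagonal entries of the toral generator; this handles $\textcircled{6}$--$\textcircled{\footnotesize{11}}$ and $\textcircled{\footnotesize{19}}$--$\textcircled{\footnotesize{24}}$ in one stroke, the lower‑triangular cases being formal transposes of the upper‑triangular ones.

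The hard part will be the three \emph{conjugate pairs} $x_{uv},x_{vu}$ appearing in $\textcircled{\footnotesize{12}}$, $\textcircled{\footnotesize{15}}$, $\textcircled{\footnotesize{18}}$, which generate copies of $\Gamma_1(\mathrm{SL}_2(\mathbb{Z}_p))$. Writing $q:=p^{r+s+2}$, the matrix computation gives, in the relevant $2\times2$ block (and the identity elsewhere),
$$
c=\begin{pmatrix}\dfrac{1+q^{3}}{1+q} & -\,p^{2r+s+3}\\[6pt] -\,p^{r+2s+3} & 1+q\end{pmatrix},
$$
and the task is to factor $c=U D L$ with $U,L$ the prescribed unipotents and $D$ diagonal. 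The only available diagonal matrices are $\mathbb{Z}_p$‑powers of $x_{1122}$, of $x_{2233}$, and of $x_{1122}x_{2233}=\mathrm{diag}\!\bigl((1+p)^{t},1,(1+p)^{-t}\bigr)$, which is precisely why $\textcircled{\footnotesize{12}}$ carries a factor $(1+y_{1122})^{\beta}$, $\textcircled{\footnotesize{18}}$ a factor $(1+y_{2233})^{\beta}$, and $\textcircled{\footnotesize{15}}$ a factor $(1+y_{1122})^{\beta}(1+y_{2233})^{\beta}$. The $(2,2)$‑entry forces $(1+p)^{-\beta}=1+q$, that is
$$
\beta=-\,\frac{\log(1+p^{r+s+2})}{\log(1+p)},
$$
a genuine element of $\mathbb{Z}_p$ of $p$‑adic valuation $r+s+1$, since $p$ is odd and hence $v_p(\log(1+p))=1$; with $\beta$ chosen this way the two off‑diagonal entries of $U D L$ automatically match those of $c$, and the $(1,1)$‑entries match because of the numerical identity $\tfrac{1+q^{3}}{1+q}=1-q+q^{2}$. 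The final step is to expand $\beta$ in base $p$ from the logarithm series: this gives $\beta_0=\cdots=\beta_{r+s}=0$, $\beta_{r+s+1}=p-1$, and $\beta_{r+s+2}\equiv\bigl(\tfrac{p^{r+s+1}-1}{2}-1\bigr)\bmod p$ --- the carry produced by $(p-1)+\tfrac{p-1}{2}$ at the level $p^{r+s+2}$ being what turns $\tfrac{p-1}{2}$ into $\tfrac{p-3}{2}$ --- and so on for the remaining digits. I expect this last digit extraction, keeping track of $2^{-1}$ and of the carries in the $p$‑adic expansion of $-\log(1+p^{r+s+2})/\log(1+p)$, to be the genuine obstacle and the point most prone to error; everything else is bookkeeping with the matrix‑unit relations $E_{ij}E_{k\ell}=\delta_{jk}E_{i\ell}$ (so in particular $E_{ij}^{2}=0$ for $i\neq j$) in the $3\times3$ matrix algebra.
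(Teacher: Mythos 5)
Your proposal follows essentially the same route as the paper's proof: each bracket is rewritten as $a^{p^r}(1-c)b^{p^s}$ with $c=a^{-p^r}b^{p^s}a^{p^r}b^{-p^s}$, the matrix $c$ is computed directly (using $a^{p^r}=I+p^{r+1}E$ for unipotents and diagonal conjugation for the toral generators), and for the conjugate pairs $\textcircled{\footnotesize{12}}$, $\textcircled{\footnotesize{15}}$, $\textcircled{\footnotesize{18}}$ the same $UDL$ triangular factorization with diagonal part $(1+p)^{\beta}=(1+p^{r+s+2})^{-1}$ realized as a $\mathbb{Z}_p$-power of the toral generators. Your version is correct and, if anything, slightly more explicit than the paper about why $y^{p^r}=x^{p^r}-1$ and how the fractional powers $(I+p^mE)^t$ are defined.
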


\begin{proof} \textcircled{1} This is a trivial computation.

 \textcircled{2}  One can directly verify
\begin{equation}\label{xxsec2ei2e1}
[y^{p^r}_{12}, y^{p^s}_{23}]=[x^{p^r}_{12}, x^{p^s}_{23}]=x^{p^r}_{12}(1- x^{-p^r}_{12}x^{p^s}_{23}x^{p^r}_{12}x^{-p^s}_{23})x^{p^s}_{23}.
\end{equation}
Thus it suffices to determine $x^{-p^r}_{12}x^{p^s}_{23}x^{p^r}_{12}x^{-p^s}_{23}$.
\begin{equation}\label{xxsec2ei2e2}
\begin{aligned}
 &x^{-p^r}_{12}x^{p^s}_{23}x^{p^r}_{12}x^{-p^s}_{23} \\
&=\left[
        \begin{array}{ccc}
          1 & -p^{r+1} & 0 \\
          0 & 1 & 0 \\
          0 & 0 & 1 \\
        \end{array}
      \right] \left[\begin{array}{ccc}
          1 & 0 & 0\\
          0 & 1 & p^{s+1} \\
          0 & 0 & 1 \\
        \end{array}
      \right]\left[
        \begin{array}{ccc}
          1 & p^{r+1} & 0 \\
          0 & 1 & 0 \\
          0 & 0 & 1 \\
        \end{array}
      \right] \left[\begin{array}{ccc}
          1 & 0 & 0\\
          0 & 1 & -p^{s+1} \\
          0 & 0 & 1 \\
        \end{array}
      \right]\\
      &=\left[
        \begin{array}{ccc}
          1 & -p^{r+1} &-p^{r+s+2} \\
          0 & 1 & p^{s+1} \\
          0 & 0 & 1 \\
        \end{array}
      \right] \left[
        \begin{array}{ccc}
          1 & p^{r+1} & 0 \\
          0 & 1 & 0 \\
          0 & 0 & 1 \\
        \end{array}
      \right] \left[\begin{array}{ccc}
          1 & 0 & 0\\
          0 & 1 & -p^{s+1} \\
          0 & 0 & 1 \\
        \end{array}
      \right]\\
       &=\left[
        \begin{array}{ccc}
          1 & 0 &-p^{r+s+2} \\
          0 & 1 & p^{s+1} \\
          0 & 0 & 1 \\
        \end{array}
      \right]\left[\begin{array}{ccc}
          1 & 0 & 0\\
          0 & 1 & -p^{s+1} \\
          0 & 0 & 1 \\
        \end{array}
      \right]=\left[
        \begin{array}{ccc}
          1 & 0 &-p^{r+s+2} \\
          0 & 1 & 0 \\
          0 & 0 & 1 \\
        \end{array}
      \right]=x_{13}^{-p^{r+s+1}}.
\end{aligned}\end{equation}
Taking (\ref{xxsec2ei2e2}) into (\ref{xxsec2ei2e1}), we obtain
$$
[y^{p^r}_{12}, y^{p^s}_{23}]=(1+y_{12})^{p^r}[1-(1+y_{13})^{-p^{r+s+1}}](1+y_{23})^{p^s}.
$$

\textcircled{3}-\textcircled{4} They are straightforward to compute.

\textcircled{5} Applying the computational method of \textcircled{2} yields it.

\textcircled{6} Let us consider the relation:
\begin{equation}\label{xxsec2ei61}
[y^{p^r}_{12}, y^{p^s}_{1122}]=[x^{p^r}_{12}, x^{p^s}_{1122}]=x^{p^r}_{12}(1- x^{-p^r}_{12}x^{p^s}_{1122}x^{p^r}_{12}x^{-p^s}_{1122})x^{p^s}_{1122}.
\end{equation}
One can compute
\begin{equation}\label{xxsec2ei62}
\begin{aligned}
 &x^{-p^r}_{12}x^{p^s}_{1122}x^{p^r}_{12}x^{-p^s}_{1122} \\
&=\left[
        \begin{array}{ccc}
          1 & -p^{r+1}& 0 \\
         0 & 1 & 0 \\
          0 & 0 & 1 \\
        \end{array}
      \right]\left[\begin{array}{ccc}
          (1+p)^{p^s} & 0 & 0\\
          0 & (1+p)^{-p^s} & 0\\
          0 & 0 & 1 \\
        \end{array}
      \right] \\&\ \ \ \ \ \   \times\left[
        \begin{array}{ccc}
          1 & p^{r+1}& 0 \\
        0 & 1 & 0 \\
          0 & 0 & 1 \\
        \end{array}
      \right] \left[\begin{array}{ccc}
        (1+p)^{-p^s} & 0 & 0\\
          0 & (1+p)^{p^s} & 0\\
          0 & 0 & 1 \\
        \end{array}
      \right]\\
      &=\left[
        \begin{array}{ccc}
          (1+p)^{p^s} & -p^{r+1}(1+p)^{-p^s}& 0 \\
          0 & (1+p)^{-p^s} &0 \\
          0& 0 & 1 \\
        \end{array}
      \right] \left[
        \begin{array}{ccc}
          1 & p^{r+1}& 0 \\
        0 & 1 & 0 \\
          0 & 0 & 1 \\
        \end{array}
      \right] \left[\begin{array}{ccc}
        (1+p)^{-p^s} & 0 & 0\\
          0 & (1+p)^{p^s} & 0\\
          0 & 0 & 1 \\
        \end{array}
      \right]\\
       &=\left[
        \begin{array}{ccc}
           (1+p)^{p^s} &  p^{r+1}(1+p)^{p^s}-  p^{r+1}(1+p)^{-p^s}&0 \\
          0& (1+p)^{-p^s} &0 \\
          0 & 0  & 1 \\
        \end{array}
      \right] \left[\begin{array}{ccc}
        (1+p)^{-p^s} & 0 & 0\\
          0 & (1+p)^{p^s} & 0\\
          0 & 0 & 1 \\
        \end{array}
      \right]\\
 &=\left[
        \begin{array}{ccc}
          1 & p^{r+1}(1+p)^{2p^s}-  p^{r+1} &0 \\
          0 & 1 & 0 \\
          0 & 0& 1 \\
        \end{array}
      \right]=x_{12}^{p^{r}(1+p)^{2p^s}-  p^{r}}=(1+y_{12})^{p^{r}(1+p)^{2p^s}-  p^{r}}.
\end{aligned}\end{equation}
Combining (\ref{xxsec2ei61}) with  (\ref{xxsec2ei62}) gives
$$
[y^{p^r}_{12}, y^{p^s}_{1122}]=(1+y_{12})^{p^r}[1-(1+y_{12})^{p^{r}(1+p)^{2p^s}-  p^{r}}](1+y_{1122})^{p^s}.
$$

 \textcircled{7}-\textcircled{\footnotesize{11}} The computational methods of
 \textcircled{7}-\textcircled{\footnotesize{11}} are similar to that of \textcircled{6}.

\textcircled{\footnotesize{12}}  In light of the relation
\begin{equation}\label{xxsec2ei121}
[y^{p^r}_{12}, y^{p^s}_{21}]=[x^{p^r}_{12}, x^{p^s}_{21}]=x^{p^r}_{12}(1- x^{-p^r}_{12}x^{p^s}_{21}x^{p^r}_{12}x^{-p^s}_{21})x^{p^s}_{21},
\end{equation}
it is sufficient for us to compute  $x^{-p^r}_{12}x^{p^s}_{21}x^{p^r}_{12}x^{-p^s}_{21}$.
\begin{equation}\label{xxsec2ei122}
\begin{aligned}
 &x^{-p^r}_{12}x^{p^s}_{21}x^{p^r}_{12}x^{-p^s}_{21} \\
&=\left[
        \begin{array}{ccc}
          1 & -p^{r+1}& 0 \\
         0 & 1 & 0\\
          0 & 0 & 1 \\
        \end{array}
      \right] \left[\begin{array}{ccc}
        1 &  0  & 0 \\
         p^{s+1} & 1 & 0\\
          0 & 0 & 1 \\
        \end{array}
      \right]\left[
        \begin{array}{ccc}
          1 & p^{r+1}& 0 \\
        0 & 1 &0 \\
          0 & 0 & 1 \\
        \end{array}
      \right] \left[\begin{array}{ccc}
          1 &  0  & 0 \\
         -p^{s+1} & 1 & 0\\
          0 & 0 & 1 \\
        \end{array}
      \right]\\
      &=\left[
        \begin{array}{ccc}
         1 -p^{r+s+2} & -p^{r+1} & 0 \\
          p^{s+1} & 1 & 0\\
          0& 0 & 1 \\
        \end{array}
      \right] \left[
        \begin{array}{ccc}
            1 & p^{r+1}& 0 \\
        0 & 1 &0 \\
          0 & 0 & 1 \\
        \end{array}
      \right] \left[\begin{array}{ccc}
          1 &  0  & 0 \\
         -p^{s+1} & 1 & 0\\
          0 & 0 & 1 \\
        \end{array}
      \right]\\
       &=\left[
        \begin{array}{ccc}
             1 -p^{r+s+2}&   -p^{2r+s+3} &0 \\
          p^{s+1}&1 + p^{r+s+2} &0 \\
          0 & 0  &  1 \\
        \end{array}
      \right] \left[\begin{array}{ccc}
    1 &  0  & 0 \\
         -p^{s+1} & 1 & 0\\
          0 & 0 & 1 \\
        \end{array}
      \right]\\
 &=\left[
        \begin{array}{ccc}
          1 -p^{r+s+2}+p^{2(r+s+2)} & -p^{2r+s+3}&0 \\
          -p^{r+2s+3} &1+ p^{r+s+2}& 0 \\
          0 & 0 & 1 \\
        \end{array}
      \right].\end{aligned}\end{equation}
 Applying triangular decomposition formula to the matrix in (\ref{xxsec2ei122}) yields
  \begin{equation}\label{xxsec2ei123} \begin{aligned}&\left[
        \begin{array}{ccc}
          1 -p^{r+s+2}+p^{2(r+s+2)} & -p^{2r+s+3}&0 \\
          -p^{r+2s+3} &1 +p^{r+s+2}& 0 \\
          0 & 0 & 1 \\
        \end{array}
      \right]\\&=\left[
        \begin{array}{ccc}
          1  & -p^{2r+s+3}(  1 +p^{r+s+2})^{-1}&0 \\
         0 & 1 & 0 \\
          0 & 0 & 1 \\
        \end{array}
      \right]\\
       &\ \ \times\left[
        \begin{array}{ccc}
          1 -p^{r+s+2}+p^{2(r+s+2)}-p^{3(r+s+2)} (  1 +p^{r+s+2})^{-1} & 0&0 \\
         0 &  1 +p^{r+s+2} & 0 \\
          0 & 0 & 1 \\
        \end{array}
      \right]\\
       &\ \ \times\left[
        \begin{array}{ccc}
          1 & 0&0 \\
         -p^{r+2s+3} (  1 +p^{r+s+2})^{-1} &  1  & 0 \\
          0 & 0 & 1 \\
        \end{array}
      \right]\\
  & =(1+y_{12})^{-p^{2r+s+2}(  1 +p^{r+s+2})^{-1}}\left[
        \begin{array}{ccc}
          (  1 +p^{r+s+2})^{-1} & 0&0 \\
         0 &  1 +p^{r+s+2} & 0 \\
          0 & 0 & 1 \\
        \end{array}
      \right]\\&\ \ \times(1+y_{21})^{-p^{r+2s+2}(  1 +p^{r+s+2})^{-1}}.
      \end{aligned}\end{equation}
We should note that  $$(1+p^{r+s+2})^{-1}=1-p^{r+s+2}+p^{2(r+s+2)}-p^{3(r+s+2)}+p^{4(r+s+2)}+\cdots.$$
It follows from the properties of $p$-adic integers that there exists one element $\beta$ such that $$(1+p^{r+s+2})^{-1}=(1+p)^{\beta},$$
where $\beta = \beta_0 + \beta_1p + \beta_2p^2 + \cdots+ \beta_{r+s}p^{r+s}+ \beta_{r+s+1}p^{r+s+1}+ \cdots $, $\beta_k\in \mathbb{Z} $ and
$0 \leq \beta_k\leq (p - 1)$. According to the expansion formula of $(  1 +p^{r+s+2})^{-1}$, we can compute all $\beta_k$. For instance,
$ \beta_0 = \beta_1=\cdots = \beta_{r+s}=0$, $\beta_{r+s+1}=p-1$, $\beta_{r+s+2}=(\frac{p^{r+s+1}-1}{2}-1)\mod p$, $\cdots$.
Thus (\ref{xxsec2ei123}) can be rewritten as
\begin{equation}\label{xxsec2ei124}\begin{aligned}&\left[
        \begin{array}{ccc}
          1 -p^{r+s+2}+p^{2(r+s+2)} & -p^{2r+s+3}&0 \\
          -p^{r+2s+3} & 1+p^{r+s+2}& 0 \\
          0 & 0 & 1 \\
        \end{array}
      \right]\\
&(1+y_{12})^{-p^{2r+s+2}(  1 +p^{r+s+2})^{-1}}\left[
        \begin{array}{ccc}
          (  1 +p)^{\beta} & 0&0 \\
         0 &  (  1 +p)^{-\beta} & 0 \\
          0 & 0 & 1 \\
        \end{array}
      \right](1+y_{21})^{-p^{r+2s+2}(  1 +p^{r+s+2})^{-1}}
\\&=(1+y_{12})^{-p^{2r+s+2}(  1 +p^{r+s+2})^{-1}}(1+y_{1122})^{\beta}(1+y_{21})^{-p^{r+2s+2}(  1 +p^{r+s+2})^{-1}} .\end{aligned}\end{equation}
Consequently, we have
$$
\begin{aligned}
&[y^{p^r}_{12}, y^{p^s}_{21}]=(1+y_{12})^{p^r}[1- (1+y_{12})^{-p^{2r+s+2}(  1 +p^{r+s+2})^{-1}}(1+y_{1122})^{\beta}\\
&{\phantom{[y^{p^r}_{12}, y^{p^s}_{21}]=}}\times(1+y_{21})^{-p^{r+2s+2}(  1 +p^{r+s+2})^{-1}}](1+y_{21})^{p^s}.
\end{aligned}$$

\textcircled{\footnotesize{13}} Let us see the Lie bracket   $[y^{p^r}_{13}, y^{p^s}_{21}]$.
$$
[y^{p^r}_{13}, y^{p^s}_{21}]=[x^{p^r}_{13}, x^{p^s}_{21}]=x^{p^r}_{13}(1- x^{-p^r}_{13}x^{p^s}_{21}x^{p^r}_{13}x^{-p^s}_{21})x^{p^s}_{21}.
$$
Note that
$$
\begin{aligned}
 &x^{-p^r}_{13}x^{p^s}_{21}x^{p^r}_{13}x^{-p^s}_{21} \\
&=\left[
        \begin{array}{ccc}
          1 & 0&-p^{r+1} \\
          0& 1 & 0 \\
          0 & 0 & 1 \\
        \end{array}
      \right] \left[\begin{array}{ccc}
          1 & 0 & 0\\
          p^{s+1}& 1 & 0\\
          0 & 0 & 1 \\
        \end{array}
      \right]\left[
        \begin{array}{ccc}
          1 & 0 & p^{r+1} \\
             0 & 1 & 0 \\
          0 & 0 & 1 \\
        \end{array}
      \right] \left[\begin{array}{ccc}
          1 & 0 & 0\\
          -p^{s+1}& 1 &0 \\
          0 &0 & 1 \\
        \end{array}
      \right]\\
      &=\left[
        \begin{array}{ccc}
          1 & 0& -p^{r+1} \\
        p^{s+1} & 1 &0 \\
          0& 0& 1 \\
        \end{array}
      \right] \left[
        \begin{array}{ccc}
           1 & 0 & p^{r+1} \\
             0 & 1 & 0 \\
          0 & 0 & 1 \\
        \end{array}
      \right] \left[\begin{array}{ccc}
          1 & 0 & 0\\
          -p^{s+1}& 1 &0 \\
          0 &0 & 1 \\
        \end{array}
      \right]\\
       &=\left[
        \begin{array}{ccc}
          1 & 0 &0 \\
         p^{s+1}& 1 & p^{r+s+2}\\
        0 & 0& 1 \\
        \end{array}
      \right]\left[\begin{array}{ccc}
          1 & 0 & 0\\
          -p^{s+1} & 1 & 0\\
          0 & 0& 1 \\
        \end{array}
      \right]=\left[
        \begin{array}{ccc}
          1 & 0 &0 \\
          0 & 1 & p^{r+s+2}\\
          0 & 0& 1 \\
        \end{array}
      \right]=x_{23}^{p^{r+s+1}}.
\end{aligned}$$
We therefore have
$$
[y^{p^r}_{13}, y^{p^s}_{21}]=(1+y_{13})^{p^r}[1-(1+y_{23})^{p^{r+s+1}}](1+y_{21})^{p^s}.
$$

\textcircled{\footnotesize{14}}-\textcircled{\footnotesize{18}} The proofs can be safely left as exercises.

\textcircled{\footnotesize{19}}-\textcircled{\footnotesize{24}} Let us sketch the proof of \textcircled{\footnotesize{19}}, the rest follow in a similar fashion. Here again, the Lie bracket can  be written as
$$
[y^{p^r}_{1122}, y^{p^s}_{21}]=[x^{p^r}_{1122}, x^{p^s}_{21}]=x^{p^r}_{1122}(1- x^{-p^r}_{1122}x^{p^s}_{21}x^{p^r}_{1122}x^{-p^s}_{21})x^{p^s}_{21}.
$$
We compute
$$
\begin{aligned}
 &x^{-p^r}_{1122}x^{p^s}_{21}x^{p^r}_{1122}x^{-p^s}_{21} \\
&=\left[
        \begin{array}{ccc}
          (1+p)^{-p^r} & 0 & 0\\
          0 & (1+p)^{p^r} & 0\\
          0 & 0 & 1 \\
        \end{array}
      \right] \left[\begin{array}{ccc}
           1 &0 & 0 \\
         p^{s+1} & 1 & 0 \\
          0 & 0 & 1 \\
        \end{array}
      \right]\\&\ \ \ \ \times\left[
        \begin{array}{ccc}
         (1+p)^{p^r} & 0 & 0\\
          0 & (1+p)^{-p^r} & 0\\
          0 & 0 & 1 \\
        \end{array}
      \right] \left[\begin{array}{ccc}
          1 & 0 & 0 \\
      - p^{s+1} & 1 & 0 \\
          0 & 0 & 1 \\
        \end{array}
      \right]\\
      &=\left[
        \begin{array}{ccc}
          (1+p)^{-p^r} & 0 & 0 \\
        p^{s+1}(1+p)^{p^r}& (1+p)^{p^r} &0 \\
          0& 0 & 1 \\
        \end{array}
      \right] \left[
        \begin{array}{ccc}
            (1+p)^{p^r} & 0 & 0\\
          0 & (1+p)^{-p^r} & 0\\
          0 & 0 & 1 \\
        \end{array}
      \right] \left[\begin{array}{ccc}
          1 & 0 & 0 \\
      - p^{s+1} & 1 & 0 \\
          0 & 0 & 1 \\
        \end{array}
      \right]\\
       &=\left[
        \begin{array}{ccc}
           1 & 0&0 \\
           p^{s+1}(1+p)^{2p^r}& 1 &0 \\
          0 & 0  & 1 \\
        \end{array}
      \right] \left[\begin{array}{ccc}
                 1 & 0 & 0 \\
      - p^{s+1} & 1 & 0 \\
          0 & 0 & 1 \\
        \end{array}
      \right]\\
 &=\left[
        \begin{array}{ccc}
          1 & 0 &0 \\
         p^{s+1}(1+p)^{2p^r}-  p^{s+1} & 1 & 0 \\
          0 & 0& 1 \\
        \end{array}
      \right]=x_{21}^{p^{s}(1+p)^{2p^r}-  p^{s}}=(1+y_{21})^{p^{s}(1+p)^{2p^r}-  p^{s}}.
\end{aligned}$$
This shows that
$$
[y^{p^r}_{1122}, y^{p^s}_{21}]=(1+y_{1122})^{p^r}[1-(1+y_{21})^{p^{s}(1+p)^{2p^r}-  p^{s}}](1+y_{21})^{p^s}.
$$

\end{proof}

In fact, we shall only need the lowest degree terms of the expansions of the above-mentioned Lie
brackets; these can be easily deduced from  Theorem \ref{xxsec2.t1}, giving

\begin{equation}\label{xxsec3.e10}
\begin{aligned}
 &[y^{p^r}_{12},\    y^{p^s}_{13}]_{\circ} = [y^{p^r}_{13},\    y^{p^s}_{23}]_{\circ} = 0,\   [y^{p^r}_{12},\    y^{p^s}_{23}]_{\circ}=y^{p^{r+s+1}}_{13},\   [y^{p^r}_{1122},\    y^{p^s}_{2233}]_{\circ} =0,\   \\
& [y^{p^r}_{21},\    y^{p^s}_{31}]_{\circ} = [y^{p^r}_{31},\    y^{p^s}_{32}]_{\circ} = 0,\   [y^{p^r}_{21},\    y^{p^s}_{32}]_{\circ} =-y_{31}^{p^{r+s+1}},\   \\
&[y^{p^r}_{12},\    y^{p^s}_{1122}]_{\circ} =-2y^{p^{r+s+1}}_{12},\   [y^{p^r}_{13},\    y^{p^s}_{1122}]_{\circ} =-y^{p^{r+s+1}}_{13},\   [y^{p^r}_{23},\    y^{p^s}_{1122}]_{\circ} =y^{p^{r+s+1}}_{23},\   \\ &[y^{p^r}_{12},\    y^{p^s}_{2233}]_{\circ} =y^{p^{r+s+1}}_{12},\   [y^{p^r}_{13},\    y^{p^s}_{2233}]_{\circ} =-y^{p^{r+s+1}}_{13},\   [y^{p^r}_{23},\    y^{p^s}_{2233}]_{\circ} =-2y^{p^{r+s+1}}_{23},\   \\&[y^{p^r}_{12},\    y^{p^s}_{21}]_{\circ} =y^{p^{r+s+1}}_{1122},\   [y^{p^r}_{13},\    y^{p^s}_{21}]_{\circ} =-y^{p^{r+s+1}}_{23},\   [y^{p^r}_{23},\    y^{p^s}_{21}]_{\circ}=0,\   \\&[y^{p^r}_{12},\    y^{p^s}_{31}]_{\circ}=-y^{p^{r+s+1}}_{32},\   [y^{p^r}_{13},\    y^{p^s}_{31}]_{\circ}=y^{p^{r+s+1}}_{1122}+y^{p^{r+s+1}}_{2233},\   [y^{p^r}_{23},\    y^{p^s}_{31}]_{\circ}=y^{p^{r+s+1}}_{21},\   \\
&[y^{p^r}_{12},\    y^{p^s}_{32}]_{\circ}=0,\    [y^{p^r}_{13},\    y^{p^s}_{32}]_{\circ}=y^{p^{r+s+1}}_{12},\   [y^{p^r}_{23},\    y^{p^s}_{32}]_{\circ}=y^{p^{r+s+1}}_{2233},\   [y^{p^r}_{1122},\    y^{p^s}_{21}]_{\circ} =-2y^{p^{r+s+1}}_{21}\\
&[y^{p^r}_{1122},\    y^{p^s}_{31}]_{\circ} =-y^{p^{r+s+1}}_{31},\   [y^{p^r}_{1122},\    y^{p^s}_{32}]_{\circ}= y^{p^{r+s+1}}_{32},\   [y^{p^r}_{2233},\    y^{p^s}_{21}]_{\circ}= y^{p^{r+s+1}}_{21},\   \\&[y^{p^r}_{2233},\    y^{p^s}_{31}]_{\circ}= -y^{p^{r+s+1}}_{31},\    [y^{p^r}_{2233},\    y^{p^s}_{32}]_{\circ}= -2y^{p^{r+s+1}}_{32}.
\end{aligned}
\end{equation}
where $[y^{p^r}_{12}, y^{p^s}_{13}]_{\circ}, [y^{p^r}_{13}, y^{p^s}_{23}]_{\circ}, \cdots, [y^{p^r}_{2233}, y^{p^s}_{32}]_{\circ}$
denote the lowest degree terms of the expansions of $[y^{p^r}_{12}, y^{p^s}_{13}]$, $[y^{p^r}_{13}, y^{p^s}_{23}]$,
$\cdots$, $[y^{p^r}_{2233}, y^{p^s}_{32}]$, respectively.  Henceforth, a similar and completely compatible
notation will be used in the whole paper.

Let $G =\Gamma_1({\rm SL}_3(\mathbb{Z}_p))$ be the first congruence kernel of  ${\rm SL}_3(\mathbb{Z}_p)$.
For convenience the topological generating set $\{x_{12},x_{13},x_{23}$,$x_{1122},x_{2233}$,$x_{21}, x_{31}, x_{32}\}$ for $G$
is briefly denoted by $\{x_1,x_2,\cdots,x_8\}$, and the corresponding generators in the ordinary group algebra $\mathbb{F}_p[G]$
are set  $y_k=x_k-1, k=1,2,\cdots, 8$. Now relations in (\ref{xxsec3.e10}) can be rewritten as

\begin{equation}\label{xxsec3.e11}
\begin{aligned}
 &[y^{p^r}_{1},\    y^{p^s}_{2}]_{\circ} = [y^{p^r}_{2},\    y^{p^s}_{3}]_{\circ} = 0,\   [y^{p^r}_{1},\    y^{p^s}_{3}]_{\circ}=y^{p^{r+s+1}}_{2},\   [y^{p^r}_{4},\    y^{p^s}_{5}]_{\circ} =0,\   \\
& [y^{p^r}_{6},\    y^{p^s}_{7}]_{\circ} = [y^{p^r}_{7},\    y^{p^s}_{8}]_{\circ} = 0,\   [y^{p^r}_{6},\    y^{p^s}_{8}]_{\circ} =-y_{7}^{p^{r+s+1}},\   \\
&[y^{p^r}_{1},\    y^{p^s}_{4}]_{\circ} =-2y^{p^{r+s+1}}_{1},\   [y^{p^r}_{2},\    y^{p^s}_{4}]_{\circ} =-y^{p^{r+s+1}}_{2},\   [y^{p^r}_{3},\    y^{p^s}_{4}]_{\circ} =y^{p^{r+s+1}}_{3},\   \\
 &[y^{p^r}_{1},\    y^{p^s}_{5}]_{\circ} =y^{p^{r+s+1}}_{1},\   [y^{p^r}_{2},\    y^{p^s}_{5}]_{\circ} =-y^{p^{r+s+1}}_{2},\   [y^{p^r}_{3},\    y^{p^s}_{5}]_{\circ} =-2y^{p^{r+s+1}}_{3},\   \\&[y^{p^r}_{1},\    y^{p^s}_{6}]_{\circ} =y^{p^{r+s+1}}_{4},\   [y^{p^r}_{2},\    y^{p^s}_{6}]_{\circ} =-y^{p^{r+s+1}}_{3},\   [y^{p^r}_{3},\    y^{p^s}_{6}]_{\circ}=0,\   \\&[y^{p^r}_{1},\    y^{p^s}_{7}]_{\circ}=-y^{p^{r+s+1}}_{8},\   [y^{p^r}_{2},\    y^{p^s}_{7}]_{\circ}=y^{p^{r+s+1}}_{4}+y^{p^{r+s+1}}_{5},\   [y^{p^r}_{3},\    y^{p^s}_{7}]_{\circ}=y^{p^{r+s+1}}_{6},\   \\
&[y^{p^r}_{1},\    y^{p^s}_{8}]_{\circ}=0,\    [y^{p^r}_{2},\    y^{p^s}_{8}]_{\circ}=y^{p^{r+s+1}}_{1},\   [y^{p^r}_{3},\    y^{p^s}_{8}]_{\circ}=y^{p^{r+s+1}}_{5},\   [y^{p^r}_{4},\    y^{p^s}_{6}]_{\circ} =-2y^{p^{r+s+1}}_{6}\\
&[y^{p^r}_{4},\    y^{p^s}_{7}]_{\circ} =-y^{p^{r+s+1}}_{7},\   [y^{p^r}_{4},\    y^{p^s}_{8}]_{\circ}= y^{p^{r+s+1}}_{8},\   [y^{p^r}_{5},\    y^{p^s}_{6}]_{\circ}= y^{p^{r+s+1}}_{6},\   \\&[y^{p^r}_{5},\    y^{p^s}_{7}]_{\circ}= -y^{p^{r+s+1}}_{7},\   [y^{p^r}_{5},\    y^{p^s}_{8}]_{\circ}= -2y^{p^{r+s+1}}_{8}.
\end{aligned}
\end{equation}

\section{Main Result and Its Proof}\label{xxsec4}
In this section, we will state and prove our  main result. Let us recall that $r \in \Omega_G$ is
normal if $r\Omega_G = \Omega_Gr$. Our purpose in this section is to study the normal elements of the completed group
algebra over  $G =\Gamma_1({\rm SL}_3(\mathbb{Z}_p))$.

For the remainder of this section, we fix the following notations: For a vector $\alpha=(\alpha_1,\alpha_2,\cdots,\alpha_n)$ of integers and any $n$-tuple  $\mathbf{y}=(y_1,y_2,\cdots,y_n)$, we write
$$\langle\alpha\rangle= \alpha_1+\alpha_2+\cdots+\alpha_n,\ \ \mathbf{y}^{\alpha}
=y_1^{\alpha_1}y_2^{\alpha_2}\cdots y_n^{\alpha_n}.$$

The first main result can now be stated:

\begin{theorem}\label{xxsec4.t1}
Let  $G =\Gamma_1({\rm SL}_3(\mathbb{Z}_p))$ and $\Omega_G$ be its completed group algebra over the
field $\mathbb{F}_p$. Then there are no nontrivial normal elements in $\Omega_G$.
\end{theorem}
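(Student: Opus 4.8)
The plan is to argue by contradiction: suppose $z \in \Omega_G$ is a nonzero normal, non-unit element. By Corollary \ref{xxsec0.2}, $z \in J$, the Jacobson radical, since $\Omega_G$ is local with residue field $\mathbb{F}_p$. Using the topological basis from Theorem \ref{xxsec0.1}, write $z = \sum_\alpha z_\alpha \mathbf{y}^\alpha$ and let $z_\circ$ denote the ``leading part'' of $z$ with respect to a suitably chosen grading/filtration — here I would use the valuation filtration on $\Omega_G$ coming from powers of $J$ (equivalently, total degree in the $y_k$), passing to the associated graded ring, which is a polynomial-type ring on the symbols of $y_1,\dots,y_8$. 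The key structural input is the commutator table \eqref{xxsec3.e11}: the ``lowest degree terms'' $[y_i^{p^r},y_j^{p^s}]_\circ$ realize, up to scalars, the Chevalley commutator relations of $\mathfrak{sl}_3$, so the associated graded ring is (a completion/restricted enveloping-algebra analogue of) $\mathcal{U}(\mathfrak{sl}_3 \otimes \mathbb{F}_p[t])$, whose center is well understood. Normality of $z$ forces its symbol $z_\circ$ to be a normal element of this graded ring; one then shows $z_\circ$ must be central there, and centrality combined with the simplicity of $\mathfrak{sl}_3$ (the Lie algebra has no nontrivial ideals, which is exactly the ``almost simple'' hypothesis made concrete for $\mathrm{SL}_3$) forces $z_\circ$ to be a scalar, contradicting $z \in J$, i.e. $z_\circ \in J/J^2$ having positive degree.

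Concretely, the mechanism for the contradiction is the following computation, which is the heart of the argument. For each generator $x_k$, normality gives an element $\lambda_k \in \Omega_G$ with $z x_k = x_k \lambda_k$, equivalently $z x_k x_k^{-1} = x_k \lambda_k x_k^{-1}$; rearranging, $[z, x_k] = x_k(\lambda_k - z)x_k^{-1} \cdot (\text{correction})$, and since $z$ is normal the element $\lambda_k - z$ lies in $J$ (as $\lambda_k \equiv z \bmod J$, both reducing to the same scalar in $\mathbb{F}_p$). Therefore $[z, y_k] = [z, x_k]$ lies in a higher piece of the filtration than $z$ itself. Passing to symbols: $[z_\circ, \bar y_k] = 0$ in the associated graded ring for every $k = 1,\dots,8$. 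Writing $z_\circ$ as a polynomial in the $\bar y_k$ and expanding each bracket $[z_\circ, \bar y_k]$ using the Leibniz rule together with the structure constants read off from \eqref{xxsec3.e11}, I obtain a system of first-order ``differential'' relations on the coefficients of $z_\circ$. This is precisely where Claim \ref{xxsec4.c1} and Claim \ref{xxsec4.c5} enter (the analogues of Claims 10 and 11 of \cite{WeiBian1}): one shows that a polynomial annihilated by all eight bracket operators $[-,\bar y_k]$ must be constant. I would prove this by induction on total degree, at each stage using a carefully ordered sequence of the relations — first exploiting the ``abelian'' pairs such as $[y_1,y_2]_\circ = [y_2,y_3]_\circ = 0$ and the torus actions $[y_i, y_4]_\circ, [y_i, y_5]_\circ$ being diagonalizable with nonzero eigenvalues to kill all but the weight-zero part, then using the off-diagonal raising/lowering relations like $[y_1,y_3]_\circ = y_2^{p^{\cdots}}$ and $[y_2,y_6]_\circ = -y_3^{p^{\cdots}}$ to propagate the vanishing across the remaining weight-zero monomials.

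The main obstacle, and the reason the $\mathrm{SL}_2$ proof of \cite{WeiBian1} does not transplant directly, is the bookkeeping in this last inductive step: for $\mathrm{SL}_3$ there are eight generators rather than three, the root system has rank two so the ``weight-zero'' subspace killed by the torus is genuinely large (it is a polynomial ring in the Cartan directions $\bar y_4, \bar y_5$ and in products of opposite root vectors like $\bar y_1 \bar y_6$, $\bar y_3 \bar y_7$), and the structure constants in \eqref{xxsec3.e11} involve the shifting exponents $p^{r+s+1}$, so the graded ring is really graded over $\mathfrak{sl}_3 \otimes \mathbb{F}_p[t]$ rather than over $\mathfrak{sl}_3$ itself, giving infinitely many generators to control simultaneously. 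Handling this requires choosing the right term order on monomials so that each bracket relation, applied to the leading monomial of a putative nonconstant $z_\circ$, produces a strictly smaller nonzero obstruction — so the bulk of Section \ref{xxsec4} will be the explicit verification that no cancellation can occur among these obstructions. Once $z_\circ$ is forced to be a scalar we contradict $z \in J$; since $z$ was an arbitrary nonzero normal non-unit, this proves every nonzero normal element is a unit, which is the assertion of Theorem \ref{xxsec4.t1}.
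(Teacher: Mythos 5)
There is a genuine gap at the heart of your argument. You deduce from normality that $[z,y_k]$ lies deeper in the $J$-adic filtration than $zy_k$, and conclude that the symbol $z_\circ$ satisfies $[z_\circ,\bar y_k]=0$ in the associated graded ring, which you then want to exploit via ``centrality forces $z_\circ$ to be a scalar.'' But the associated graded ring of $\Omega_G$ for the $J$-adic filtration is a \emph{commutative} polynomial ring (the commutator $[y_i^{p^r},y_j^{p^s}]$ has lowest-degree term $\pm y_k^{p^{r+s+1}}$, of degree $p^{r+s+1}>p^r+p^s$ for odd $p$), so $[z_\circ,\bar y_k]=0$ holds for \emph{every} element and carries no information; your proposed system of first-order relations ``$[-,\bar y_k]$ annihilates $z_\circ$'' is vacuous, and there is no graded Lie algebra of the form $\mathfrak{sl}_3\otimes\mathbb{F}_p[t]$ acting here, since the bracket is not homogeneous for total degree. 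The constraint that normality actually imposes, and that the paper uses, is a \emph{divisibility} constraint: writing $y_k^{p^r}W-Wy_k^{p^r}=W\delta_k(r)$ and extracting lowest-degree terms gives $[y_k^{p^r},w_m]_\circ=w_m\cdot(\delta_k(r))_\circ$, where the left side is an explicit nonzero combination of ``partial derivatives'' $\partial w_m/\partial y_j^{p^s}$ times $y_i^{p^{r+s+1}}$ (nonzero by Claim \ref{xxsec4.c1}), and the contradiction comes from the fact that such an expression cannot be divisible by $w_m$, by comparing degrees in the individual variables. Your Claims \ref{xxsec4.c1} and \ref{xxsec4.c5} are misread as ``a polynomial killed by all eight bracket operators is constant''; they are in fact a nonvanishing statement for the partial derivatives and a divisibility statement $w_m\mid\partial w_d/\partial y_k^{p^s}$, respectively.

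A second, independent omission: the whole analysis of lowest-degree terms only produces the clean relations \eqref{xxsec3.e11} when the exponents occurring in $w_d$ are controlled by a common $p$-power $p^{s_d}$, so the proof must be stratified by $s=\min_d s_d$. Your proposal has no analogue of this invariant and therefore silently assumes the case $s=s_m$. The complementary case $s<s_m$ cannot be handled by any leading-term argument on $w_m$ alone: the paper needs Claims \ref{xxsec4.c5} and \ref{xxsec4.c6} to write $w_d=w_mu+v$ and then runs an infinite descent $W_0=W$, $W_{n+1}=W_n(1-u_n)$, whose limit violates the minimality of $s$. Without both the divisibility mechanism and this second case, the proposed proof does not close.
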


\begin{proof}
Suppose that $W$ is a nontrivial normal element of $\Omega_G$ and $W$ is of the form
$$W = w_m + w_{m+1} + w_{m+2}+ \cdots+ w_d + \cdots,
$$
where $w_d (d=m, m+1, m+2, \cdots, m\geq 1)$ are homogeneous polynomials with respect
to $y_1, y_2, \cdots, y_8$ of degree $d$. That is, $w_d$ has the form
$$
 w_d =\sum_{\alpha\in \mathbb{N}^8,\ \langle\alpha\rangle=d}a_{\alpha}\mathbf{y}^{\alpha}, \ \ \ \  a_\alpha\in \mathbb{F}_p,
$$
where $\alpha=(\alpha_1,\alpha_2,\cdots,\alpha_8)\in \mathbb{N}^8$ and $\mathbf{y}^{\alpha}
=y_1^{\alpha_1}y_2^{\alpha_2}\cdots y_8^{\alpha_8}$. Moreover, we put
$$
s_d= \max\{\ s\ | \ p^s \text{ is a common divisor of the elements of each}\  \alpha \ \text{in}\ w_d, a_{\alpha}\neq 0\ \},
$$
which will be frequently invoked in the sequel.

Since $W$ is a normal element, there exists an element $\delta_k(r)\in\Omega_G$ such that
\begin{equation}\label{xxsec4.e1}
[y_k^{p^r},W]=W\cdot \delta_k(r)
\end{equation}
for each $y_k(k=1,2,\cdots, 8)$.
For a further discussion of (\ref{xxsec4.e1}),
we define
$$
s=\min\{\ s_d\ |\ d=m,m+1,m+2,\cdots \ \}.
$$
 So we get to divide the proof of the theorem into two cases: $s = s_m$ and $s < s_m$.

 \textbf{Case 1}.  $s = s_m$. In this case, by (\ref{xxsec4.e1}) we get
  \begin{equation}\label{xxsec4.e5}
[y_k^{p^r},w_m]_{\circ}=w_m\cdot (\delta_k(r))_{\circ}
\end{equation}for each $y_k(k=1,2,\cdots, 8)$. Recall that  $[y_k^{p^r},w_m]_{\circ}$ and $(\delta_k(r))_{\circ}$ stand for the  lowest
degree terms in $[y_k^{p^r},w_m]$ and $ \delta_k(r)$, respectively. It should be pointed out that  $[y_k^{p^r},w_m]_{\circ}$ is a homogeneous polynomial of degree $m-p^s+p^{r+s+1}$.

We can assume the lowest degree homogeneous polynomial $w_m$ of $W$ is of the form
\begin{equation}\label{xxsec4.e6}
\begin{aligned}
 & w_m =\sum_{i_1=0}^{\alpha_1}\cdots\sum_{i_8=0}^{\alpha_8} a_{i_1\cdots i_8}(y_1^{p^s})^{i_1}(y_2^{p^s})^{i_2}\cdots (y_8^{p^s})^{i_8}\\
&\in  \mathbb{F}_p[y_1^{p^s},y_2^{p^s},\cdots, y_8^{p^s}]\backslash \mathbb{F}_p[y_1^{p^{s+1}},y_2^{p^{s+1}},\cdots, y_8^{p^{s+1}}]
,\end{aligned}\end{equation}
where $\mathbb{F}_p[y_1^{p^s},y_2^{p^s},\cdots, y_8^{p^s}]$
denotes the polynomial ring generated by $y_1^{p^s},y_2^{p^s},\cdots, y_8^{p^s}$ over the field $\mathbb{F}_p$.
Then we can compute each $[y_k^{p^r},w_m]_{\circ}$. For $k=1$, by (\ref{xxsec3.e11}), we obtain
\begin{eqnarray}
 &[y_1^{p^{r}},w_m]_{\circ}=\{\sum_{i_1=0}^{\alpha_1}\cdots\sum_{i_8=0}^{\alpha_8} a_{i_1\cdots i_8}(y_1^{p^s})^{i_1}y_1^{p^{r}}(y_2^{p^s})^{i_2}\cdots (y_8^{p^s})^{i_8}\nonumber\\&\ \ -\sum_{i_1=0}^{\alpha_1}\cdots\sum_{i_8=0}^{\alpha_8} a_{i_1\cdots i_8}(y_1^{p^s})^{i_1}(y_2^{p^s})^{i_2}\cdots (y_8^{p^s})^{i_8}y_1^{p^{r}}\}_{\circ}\nonumber\\
&=\{ \sum_{i_1=0}^{\alpha_1}\cdots\sum_{i_8=0}^{\alpha_8} a_{i_1\cdots i_8}(y_1^{p^s})^{i_1}(y_2^{p^s})^{i_2}(y_3^{p^s})y_1^{p^{r}}(y_3^{p^s})^{i_3-1}\cdots (y_8^{p^s})^{i_8}\nonumber\\&\ \ -\sum_{i_1=0}^{\alpha_1}\cdots\sum_{i_8=0}^{\alpha_8} a_{i_1\cdots i_8}(y_1^{p^s})^{i_1}(y_2^{p^s})^{i_2}\cdots (y_8^{p^s})^{i_8}y_1^{p^{r}}\}_{\circ}\nonumber\\
&\ \ \ +\sum_{i_1=0}^{\alpha_1}\cdots\sum_{i_8=0}^{\alpha_8} a_{i_1\cdots i_8}(y_1^{p^s})^{i_1}(y_2^{p^s})^{i_2}y_2^{p^{r+s+1}}(y_3^{p^s})^{i_3-1}\cdots (y_8^{p^s})^{i_8}\nonumber\\
&= \{(\sum_{i_1=0}^{\alpha_1}\cdots\sum_{i_8=0}^{\alpha_8} a_{i_1\cdots i_8}(y_1^{p^s})^{i_1}(y_2^{p^s})^{i_2}(y_3^{2p^s})y_1^{p^{r}}(y_3^{p^s})^{i_3-2}\cdots (y_8^{p^s})^{i_8}\nonumber\\&\ \ -\sum_{i_1=0}^{\alpha_1}\cdots\sum_{i_8=0}^{\alpha_8} a_{i_1\cdots i_8}(y_1^{p^s})^{i_1}(y_2^{p^s})^{i_2}\cdots (y_8^{p^s})^{i_8}y_1^{p^{r}}\}_{\circ}\nonumber\\
&\ \ \ +\sum_{i_1=0}^{\alpha_1}\cdots\sum_{i_8=0}^{\alpha_8} a_{i_1\cdots i_8}(y_1^{p^s})^{i_1}(y_2^{p^s})^{i_2}y_2^{p^{r+s+1}}(y_3^{p^s})^{i_3-1}\cdots (y_8^{p^s})^{i_8}\nonumber\\
&\ \ \ +\sum_{i_1=0}^{\alpha_1}\cdots\sum_{i_8=0}^{\alpha_8} a_{i_1\cdots i_8}(y_1^{p^s})^{i_1}(y_2^{p^s})^{i_2}(y_3^{p^s})y_2^{p^{r+s+1}}(y_3^{p^s})^{i_3-2}\cdots (y_8^{p^s})^{i_8}\nonumber\\
&\hspace{3cm}   \vdots\nonumber\\
&= \{\sum_{i_1=0}^{\alpha_1}\cdots\sum_{i_8=0}^{\alpha_8} a_{i_1\cdots i_8}(y_1^{p^s})^{i_1}(y_2^{p^s})^{i_2}(y_3^{p^s})^{i_3}y_1^{p^{r}}\cdots (y_8^{p^s})^{i_8}\nonumber\\&\ \ -\sum_{i_1=0}^{\alpha_1}\cdots\sum_{i_8=0}^{\alpha_8} a_{i_1\cdots i_8}(y_1^{p^s})^{i_1}(y_2^{p^s})^{i_2}\cdots (y_8^{p^s})^{i_8}y_1^{p^{r}}\}_{\circ}\nonumber\\
&\ \ \ +\sum_{i_1=0}^{\alpha_1}\cdots\sum_{i_8=0}^{\alpha_8} a_{i_1\cdots i_8}(y_1^{p^s})^{i_1}(y_2^{p^s})^{i_2}y_2^{p^{r+s+1}}(y_3^{p^s})^{i_3-1}\cdots (y_8^{p^s})^{i_8}\nonumber\\
&+\sum_{i_1=0}^{\alpha_1}\cdots\sum_{i_8=0}^{\alpha_8} a_{i_1\cdots i_8}(y_1^{p^s})^{i_1}(y_2^{p^s})^{i_2}(y_3^{p^s})y_2^{p^{r+s+1}}(y_3^{p^s})^{i_3-2}\cdots (y_8^{p^s})^{i_8}+\cdots\nonumber\\
&=\{ \sum_{i_1=0}^{\alpha_1}\cdots\sum_{i_8=0}^{\alpha_8} a_{i_1\cdots i_8}(y_1^{p^s})^{i_1}(y_2^{p^s})^{i_2}(y_3^{p^s})^{i_3}y_1^{p^{r}}\cdots (y_8^{p^s})^{i_8}
\nonumber\\&\ \ -\sum_{i_1=0}^{\alpha_1}\cdots\sum_{i_8=0}^{\alpha_8} a_{i_1\cdots i_8}(y_1^{p^s})^{i_1}(y_2^{p^s})^{i_2}\cdots (y_8^{p^s})^{i_8}y_1^{p^{r}}\}_{\circ}\nonumber\\
&\ \ \ +\frac{\partial w_m}{\partial  y_3^{p^s}}y_2^{p^{r+s+1}}\nonumber\\
&= \{\sum_{i_1=0}^{\alpha_1}\cdots\sum_{i_8=0}^{\alpha_8} a_{i_1\cdots i_8}(y_1^{p^s})^{i_1}\cdots(y_4^{p^s})^{i_4}y_1^{p^{r}}\cdots (y_8^{p^s})^{i_8}\nonumber\\&\ \ -\sum_{i_1=0}^{\alpha_1}\cdots\sum_{i_8=0}^{\alpha_8} a_{i_1\cdots i_8}(y_1^{p^s})^{i_1}(y_2^{p^s})^{i_2}\cdots (y_8^{p^s})^{i_8}y_1^{p^{r}}\}_{\circ}\nonumber\\
&\ \ +\frac{\partial w_m}{\partial  y_3^{p^s}}y_2^{p^{r+s+1}}-2\frac{\partial w_m}{\partial  y_4^{p^s}}y_1^{p^{r+s+1}}\nonumber\\
&= \{\sum_{i_1=0}^{\alpha_1}\cdots\sum_{i_8=0}^{\alpha_8} a_{i_1\cdots i_8}(y_1^{p^s})^{i_1}\cdots(y_5^{p^s})^{i_5}y_1^{p^{r}}\cdots (y_8^{p^s})^{i_8}\nonumber\\&\ \ -\sum_{i_1=0}^{\alpha_1}\cdots\sum_{i_8=0}^{\alpha_8} a_{i_1\cdots i_8}(y_1^{p^s})^{i_1}(y_2^{p^s})^{i_2}\cdots (y_8^{p^s})^{i_8}y_1^{p^{r}}\}_{\circ}\nonumber\\
&\ \ +\frac{\partial w_m}{\partial  y_3^{p^s}}y_2^{p^{r+s+1}}-2\frac{\partial w_m}{\partial  y_4^{p^s}}y_1^{p^{r+s+1}}+\frac{\partial w_m}{\partial  y_5^{p^s}}y_1^{p^{r+s+1}}\nonumber\\
&=\{\sum_{i_1=0}^{\alpha_1}\cdots\sum_{i_8=0}^{\alpha_8} a_{i_1\cdots i_8}(y_1^{p^s})^{i_1}\cdots(y_6^{p^s})^{i_6}y_1^{p^{r}}(y_7^{p^s})^{i_7} (y_8^{p^s})^{i_8}\nonumber\\&\ \ -\sum_{i_1=0}^{\alpha_1}\cdots\sum_{i_8=0}^{\alpha_8} a_{i_1\cdots i_8}(y_1^{p^s})^{i_1}(y_2^{p^s})^{i_2}\cdots (y_8^{p^s})^{i_8}y_1^{p^{r}}\}_{\circ}\nonumber\\
&\ \ +\frac{\partial w_m}{\partial  y_3^{p^s}}y_2^{p^{r+s+1}}-2\frac{\partial w_m}{\partial  y_4^{p^s}}y_1^{p^{r+s+1}}+\frac{\partial w_m}{\partial  y_5^{p^s}}y_1^{p^{r+s+1}}+\frac{\partial w_m}{\partial  y_6^{p^s}}y_4^{p^{r+s+1}}\nonumber\\
&= \{\sum_{i_1=0}^{\alpha_1}\cdots\sum_{i_8=0}^{\alpha_8} a_{i_1\cdots i_8}(y_1^{p^s})^{i_1}\cdots(y_7^{p^s})^{i_7} y_1^{p^{r}}(y_8^{p^s})^{i_8}\nonumber\\&\ \ -\sum_{i_1=0}^{\alpha_1}\cdots\sum_{i_8=0}^{\alpha_8} a_{i_1\cdots i_8}(y_1^{p^s})^{i_1}(y_2^{p^s})^{i_2}\cdots (y_8^{p^s})^{i_8}y_1^{p^{r}}\}_{\circ}\nonumber\\
&\ \ +\frac{\partial w_m}{\partial  y_3^{p^s}}y_2^{p^{r+s+1}}-2\frac{\partial w_m}{\partial  y_4^{p^s}}y_1^{p^{r+s+1}}+\frac{\partial w_m}{\partial  y_5^{p^s}}y_1^{p^{r+s+1}}+\frac{\partial w_m}{\partial  y_6^{p^s}}y_4^{p^{r+s+1}}-\frac{\partial w_m}{\partial  y_7^{p^s}}y_8^{p^{r+s+1}}\nonumber\\
&=\frac{\partial w_m}{\partial  y_3^{p^s}}y_2^{p^{r+s+1}}-2\frac{\partial w_m}{\partial  y_4^{p^s}}y_1^{p^{r+s+1}}+\frac{\partial w_m}{\partial  y_5^{p^s}}y_1^{p^{r+s+1}}+\frac{\partial w_m}{\partial  y_6^{p^s}}y_4^{p^{r+s+1}}-\frac{\partial w_m}{\partial  y_7^{p^s}}y_8^{p^{r+s+1}}.\nonumber
\end{eqnarray}
Again by (\ref{xxsec3.e11}), the same argument  gives
\begin{eqnarray}
&[y_2^{p^{r}},w_m]_{\circ}=-\frac{\partial w_m}{\partial  y_4^{p^s}}y_2^{p^{r+s+1}}-\frac{\partial w_m}{\partial  y_5^{p^s}}y_2^{p^{r+s+1}}-\frac{\partial w_m}{\partial  y_6^{p^s}}y_3^{p^{r+s+1}}\nonumber\\&\qquad \qquad\qquad\qquad+\frac{\partial w_m}{\partial  y_7^{p^s}}(y_4^{p^{r+s+1}}+y_5^{p^{r+s+1}})+\frac{\partial w_m}{\partial  y_8^{p^s}}y_1^{p^{r+s+1}},\nonumber\\
&[y_3^{p^{r}},w_m]_{\circ}=-\frac{\partial w_m}{\partial  y_1^{p^s}}y_2^{p^{r+s+1}}+\frac{\partial w_m}{\partial  y_4^{p^s}}y_3^{p^{r+s+1}}-\frac{\partial w_m}{\partial  y_5^{p^s}}2y_3^{p^{r+s+1}}\nonumber\\&+\frac{\partial w_m}{\partial  y_7^{p^s}}y_6^{p^{r+s+1}}+\frac{\partial w_m}{\partial  y_8^{p^s}}y_5^{p^{r+s+1}},\nonumber\\
&[y_4^{p^{r}},w_m]_{\circ}=\frac{\partial w_m}{\partial  y_1^{p^s}}2y_1^{p^{r+s+1}}+\frac{\partial w_m}{\partial  y_2^{p^s}}y_2^{p^{r+s+1}}
-\frac{\partial w_m}{\partial  y_3^{p^s}}y_3^{p^{r+s+1}}\nonumber\\&\qquad \qquad\qquad\qquad-\frac{\partial w_m}{\partial  y_6^{p^s}}2y_6^{p^{r+s+1}}-\frac{\partial w_m}{\partial  y_7^{p^s}}y_7^{p^{r+s+1}}+\frac{\partial w_m}{\partial  y_8^{p^s}}y_8^{p^{r+s+1}},\nonumber\\
&[y_5^{p^{r}},w_m]_{\circ}=-\frac{\partial w_m}{\partial  y_1^{p^s}}y_1^{p^{r+s+1}}+\frac{\partial w_m}{\partial  y_2^{p^s}}y_2^{p^{r+s+1}}
+\frac{\partial w_m}{\partial  y_3^{p^s}}2y_3^{p^{r+s+1}}\nonumber\\&\qquad \qquad\qquad\qquad+\frac{\partial w_m}{\partial  y_6^{p^s}}y_6^{p^{r+s+1}}-\frac{\partial w_m}{\partial  y_7^{p^s}}y_7^{p^{r+s+1}}-\frac{\partial w_m}{\partial  y_8^{p^s}}2y_8^{p^{r+s+1}},\nonumber\\
&[y_6^{p^{r}},w_m]_{\circ}=-\frac{\partial w_m}{\partial  y_1^{p^s}}y_4^{p^{r+s+1}}+\frac{\partial w_m}{\partial  y_2^{p^s}}y_3^{p^{r+s+1}}
+\frac{\partial w_m}{\partial  y_4^{p^s}}2y_6^{p^{r+s+1}}\nonumber\\&-\frac{\partial w_m}{\partial  y_5^{p^s}}y_6^{p^{r+s+1}}-\frac{\partial w_m}{\partial  y_8^{p^s}}y_7^{p^{r+s+1}},\nonumber\\
&[y_7^{p^{r}},w_m]_{\circ}  =\frac{\partial w_m}{\partial  y_1^{p^s}}y_8^{p^{r+s+1}}-\frac{\partial w_m}{\partial  y_2^{p^s}}(y_4^{p^{r+s+1}}+y_5^{p^{r+s+1}})
\nonumber\\&\qquad \qquad\qquad\qquad -\frac{\partial w_m}{\partial  y_3^{p^s}}y_6^{p^{r+s+1}}+\frac{\partial w_m}{\partial  y_4^{p^s}}y_7^{p^{r+s+1}}+\frac{\partial w_m}{\partial  y_5^{p^s}}y_7^{p^{r+s+1}},\nonumber\\
&[y_8^{p^{r}},w_m]_{\circ}=-\frac{\partial w_m}{\partial  y_2^{p^s}}y_1^{p^{r+s+1}}
-\frac{\partial w_m}{\partial  y_3^{p^s}}y_5^{p^{r+s+1}}-\frac{\partial w_m}{\partial  y_4^{p^s}}y_8^{p^{r+s+1}}\nonumber\\&\qquad   +\frac{\partial w_m}{\partial  y_5^{p^s}}2y_8^{p^{r+s+1}}+\frac{\partial w_m}{\partial  y_6^{p^s}}y_7^{p^{r+s+1}}.\nonumber
\end{eqnarray}
Those identities together with (\ref{xxsec4.e5}) give rise to

\begin{equation}\label{xxsec4.e10}
\left\{
\begin{aligned}
 & \frac{\partial w_m}{\partial  y_3^{p^s}}y_2^{p^{r+s+1}}-2\frac{\partial w_m}{\partial  y_4^{p^s}}y_1^{p^{r+s+1}}+\frac{\partial w_m}{\partial  y_5^{p^s}}y_1^{p^{r+s+1}}+\frac{\partial w_m}{\partial  y_6^{p^s}}y_4^{p^{r+s+1}}\\&-\frac{\partial w_m}{\partial  y_7^{p^s}}y_8^{p^{r+s+1}}=w_m\cdot (\delta_1(r))_{\circ},\\
&{\phantom{=[y_2^{p^{r}},w_m]_{\circ}=}}\vdots\\
&-\frac{\partial w_m}{\partial  y_1^{p^s}}y_4^{p^{r+s+1}}+\frac{\partial w_m}{\partial  y_2^{p^s}}y_3^{p^{r+s+1}}
+\frac{\partial w_m}{\partial  y_4^{p^s}}2y_6^{p^{r+s+1}} -\frac{\partial w_m}{\partial  y_5^{p^s}}y_6^{p^{r+s+1}}\\&-\frac{\partial w_m}{\partial  y_8^{p^s}}y_7^{p^{r+s+1}}=w_m\cdot (\delta_6(r))_{\circ},\\
&{\phantom{=[y_2^{p^{r}},w_m]_{\circ}=}}\vdots
\end{aligned}
\right.
\end{equation}

\begin{claim}\label{xxsec4.c1}
$\frac{\partial w_m}{\partial  y_k^{p^s}}(k=1,2,\cdots,8)$ are not exactly all zeros.
\end{claim}
\begin{proof}  In view of (\ref{xxsec4.e6}) we can rewrite $w_m$ as
$$w_m=\sum_{{i_1}=0}^{\alpha_1}(y_1^{p^s})^{i_1}v_{i_1}(y_2^{p^s}, y_3^{p^s}, \cdots,  y_8^{p^s}),$$
where $v_{i_1}(y_2^{p^s}, y_3^{p^s}, \cdots,  y_8^{p^s})\in\mathbb{F}_p[y_2^{p^s},y_3^{p^s},\cdots, y_8^{p^s}]$.
Suppose that on the contradictory the claim, then we have
\begin{equation}\label{xxsec4.ec1cha30}\frac{\partial w_m}{\partial  y_1^{p^s}}=i_1\sum_{i_1=1}^{\alpha_1}(y_1^{p^s})^{i_1-1}v_{i_1}(y_2^{p^s}, y_3^{p^s}, \cdots,  y_8^{p^s})=0.\end{equation}
 To begin with, let us look on (\ref{xxsec4.ec1cha30}) as a polynomial related to $  y_1^{p^s}$. Then for each $i_1(i_1=1,2,\cdots,\alpha_1)$,
$${i_1}v_{i_1}(y_2^{p^s}, y_3^{p^s}, \cdots,  y_8^{p^s})=0.$$
This implies that $i_1=b_{i_1}p^{n_{i_1}}(i_i=1,2,\cdots,\alpha_1)$, where $n_{i_1}\geq1$, $\gcd(b_{i_1},p)=1$. Similarly, we can rewrite $w_m$ as
\begin{eqnarray}
&\label{xxsec4.ec1cha32}w_m=\sum_{{i_2}=0}^{\alpha_2}(y_2^{p^s})^{i_2}v_{i_2}(y_1^{p^s}, y_3^{p^s}, \cdots,  y_8^{p^s}), \\
&\ \ \ \ \ \ \ \ \ \ \ \ \ \ \ \ \vdots\nonumber\\
&\label{xxsec4.ec1cha33}w_m=\sum_{{i_8}=0}^{\alpha_8}(y_8^{p^s})^{i_8}v_{i_8}[(y_1^{p^s}, y_2^{p^s}, \cdots,  y_7^{p^s}),
\end{eqnarray}
respectively, where $$  \begin{aligned}&v_{i_2}(y_1^{p^s},  y_3^{p^s}, \cdots,  y_8^{p^s})\in\mathbb{F}_p[y_1^{p^s},y_3^{p^s},\cdots, y_8^{p^s}],\\
&\ \ \ \ \ \ \ \ \ \ \ \ \ \ \ \ \vdots\\
&v_{i_8}(y_1^{p^s}, y_2^{p^s}, \cdots,  y_7^{p^s})\in\mathbb{F}_p[y_1^{p^s},y_2^{p^s},\cdots, y_7^{p^s}]. \end{aligned} $$ Repeating the
above analogous proof we arrive at  $i_2=b_{i_2}p^{n_{i_2}}(i_2=1,2,\cdots,\alpha_2)$, where $n_{i_2}\geq1$, $\gcd(b_{i_2},p)=1$, $\cdots$,
$i_8=b_{i_8}p^{n_{i_8}}(i_8=1,2,\cdots,\alpha_8)$, where $n_{i_8}\geq1$, $\gcd(b_{i_8},p)=1$. This shows that
$$w_m\in\mathbb{F}_p[y_1^{p^{s+1}},y_2^{p^{s+1}},\cdots, y_8^{p^{s+1}}],$$
which is contradictory to the assumption
$$w_m\in\mathbb{F}_p[y_1^{p^s},y_2^{p^s},\cdots, y_8^{p^s}]\backslash \mathbb{F}_p[y_1^{p^{s+1}},y_2^{p^{s+1}},\cdots, y_8^{p^{s+1}}].$$

\end{proof}

Let us now come back to the system of equations (\ref{xxsec4.e10}). By Claim \ref{xxsec4.c1} and without loss of generality,
we may assume that $\frac{\partial w_m}{\partial  y_1^{p^s}}\neq 0$. Then there exists one positive integer $r\gg0$ such that
$$\begin{aligned}&-\frac{\partial w_m}{\partial  y_1^{p^s}}y_4^{p^{r+s+1}}+\frac{\partial w_m}{\partial  y_2^{p^s}}y_3^{p^{r+s+1}}
+\frac{\partial w_m}{\partial  y_4^{p^s}}2y_6^{p^{r+s+1}}-\frac{\partial w_m}{\partial  y_5^{p^s}}y_6^{p^{r+s+1}}-\frac{\partial w_m}{\partial  y_8^{p^s}}y_7^{p^{r+s+1}}\\&=w_m\cdot (\delta_6(r))_{\circ}\neq0,\end{aligned}$$
which can be rearranged as
\begin{equation}\label{xxsec4.e20}\begin{aligned}&-\frac{\partial w_m}{\partial  y_1^{p^s}}y_4^{p^{r+s+1}}+\frac{\partial w_m}{\partial  y_2^{p^s}}y_3^{p^{r+s+1}}
+\frac{\partial w_m}{\partial  y_4^{p^s}}2y_6^{p^{r+s+1}}-\frac{\partial w_m}{\partial  y_5^{p^s}}y_6^{p^{r+s+1}}-\frac{\partial w_m}{\partial  y_8^{p^s}}y_7^{p^{r+s+1}}\\&=w_m \sum_{i=3,4,6,7}U_6^i(y_1, y_2, \cdots,  y_8)y_i^{p^r} \neq0,\end{aligned}\end{equation}
where $ \sum_{i=3,4,6,7}U_6^i(y_1, y_2, \cdots,  y_8)y_i^{p^r}= (\delta_6(r))_{\circ}$.   Comparing the coefficients of $y_4^{p^r}$ of the above relation, we further get
 \begin{equation}\label{xxsec4.ebu20} -\frac{\partial w_m}{\partial  y_1^{p^s}}y_4^{p^{r+s+1}-p^r}=w_m U_6^4(y_1, y_2, \cdots,  y_8)  \neq0. \end{equation}
 Taking into account (\ref{xxsec4.ec1cha33}) and comparing the degree of $ y_8^{p^s}$ in the two sides of (\ref{xxsec4.ebu20}),  we obtain
$$ \begin{aligned}&-( y_8^{p^s})^{\alpha_8}\frac{\partial  v_{i_8}(y_1^{p^s}, \cdots, y_7^{p^s})}{\partial  y_1^{p^s}}y_4^{p^{r+s+1}-p^r}
\\& =( y_8^{p^s})^{\alpha_8}v_{i_8}(y_1^{p^s}, \cdots,  y_7^{p^s})g(y_1, y_2, \cdots,  y_7) \neq0.
\end{aligned} $$
where $g(y_1, y_2, \cdots,  y_7)$ stands for the sum of certain terms in $U_6^4(y_1, y_2, \cdots,  y_8)$. It follows that
$$ -\frac{\partial  v_{i_8}(y_1^{p^s}, \cdots, y_7^{p^s})}{\partial  y_1^{p^s}}y_4^{p^{r+s+1}-p^r}
  = v_{i_8}(y_1^{p^s}, \cdots,  y_7^{p^s})g(y_1, y_2, \cdots,  y_7) \neq0.$$
Comparing the degree of $y_1^{p^s}$ in the two sides of the above equality, we immediately arrive at a contradiction. This implies
that $W$ is not a nontrivial normal element of $\Omega_G$ under the case of $s=s_m$.

\textbf{Case 2}.  $s < s_m$. Now there exists some fixed $d$ with $d>m$ such that $s=s_d<s_m$, and it follows from  (\ref{xxsec4.e1}) that
  \begin{equation}\label{xxsec4.c2e1}
[y_k^{p^r},w_d]_{\circ}=w_m\cdot (\delta_k(r))_{\circ}
\end{equation} for each $y_k(k=1,2,\cdots, 8)$  provided $r\gg 0$. To proceed our discussion, we assume that $w_d$ is of the form
\begin{equation}\label{xxsec4.c2e2}
\begin{aligned}
 & w_d =\sum_{i_1=0}^{p-1}\cdots\sum_{i_8=0}^{p-1} (y_1^{p^s})^{i_1}(y_2^{p^s})^{i_2} \cdots (y_8^{p^s})^{i_8} h_{i_1i_2 \cdots i_8}(y_1^{p^{s+1}},y_2^{p^{s+1}},\cdots, y_8^{p^{s+1}})\\
&\in  \mathbb{F}_p[y_1^{p^s},y_2^{p^s},\cdots, y_8^{p^s}]\backslash \mathbb{F}_p[y_1^{p^{s+1}},y_2^{p^{s+1}},\cdots, y_8^{p^{s+1}}] ,
\end{aligned}\end{equation}
where $\mathbb{F}_p[y_1^{p^s},y_2^{p^s},\cdots, y_8^{p^s}]$
 denotes the polynomial ring generated by $y_1^{p^s},y_2^{p^s},\cdots, y_8^{p^s}$ over the field $\mathbb{F}_p$.

Using the computational method of Case 1 and producing a system of partial differential equations:
\begin{eqnarray}
& \label{xxsec4.s2e1}\frac{\partial w_d}{\partial  y_3^{p^s}}y_2^{p^{r+s+1}}-2\frac{\partial w_d}{\partial  y_4^{p^s}}y_1^{p^{r+s+1}}+\frac{\partial w_d}{\partial  y_5^{p^s}}y_1^{p^{r+s+1}}+\frac{\partial w_d}{\partial  y_6^{p^s}}y_4^{p^{r+s+1}}\\&-\frac{\partial w_d}{\partial  y_7^{p^s}}y_8^{p^{r+s+1}}=w_m\cdot (\delta_1(r))_{\circ},\nonumber\\
&\label{xxsec4.s2e2}-\frac{\partial w_d}{\partial  y_4^{p^s}}y_2^{p^{r+s+1}}-\frac{\partial w_d}{\partial  y_5^{p^s}}y_2^{p^{r+s+1}}-\frac{\partial w_d}{\partial  y_6^{p^s}}y_3^{p^{r+s+1}}+  \frac{\partial w_d}{\partial  y_7^{p^s}}(y_4^{p^{r+s+1}}+y_5^{p^{r+s+1}})\\&+\frac{\partial w_d}{\partial  y_8^{p^s}}y_1^{p^{r+s+1}}=w_m\cdot (\delta_2(r))_{\circ},\nonumber\\
&\label{xxsec4.s2e3}-\frac{\partial w_d}{\partial  y_1^{p^s}}y_2^{p^{r+s+1}}+\frac{\partial w_d}{\partial  y_4^{p^s}}y_3^{p^{r+s+1}}-\frac{\partial w_d}{\partial  y_5^{p^s}}2y_3^{p^{r+s+1}}
+\frac{\partial w_d}{\partial  y_7^{p^s}}y_6^{p^{r+s+1}}\\&+\frac{\partial w_d}{\partial  y_8^{p^s}}y_5^{p^{r+s+1}}=w_m\cdot (\delta_3(r))_{\circ}\nonumber,\\
&\label{xxsec4.s2e4}\frac{\partial w_d}{\partial  y_1^{p^s}}2y_1^{p^{r+s+1}}+\frac{\partial w_d}{\partial  y_2^{p^s}}y_2^{p^{r+s+1}}
-\frac{\partial w_d}{\partial  y_3^{p^s}}y_3^{p^{r+s+1}}-\frac{\partial w_d}{\partial  y_6^{p^s}}2y_6^{p^{r+s+1}}\\& -\frac{\partial w_d}{\partial  y_7^{p^s}}y_7^{p^{r+s+1}}+\frac{\partial w_d}{\partial  y_8^{p^s}}y_8^{p^{r+s+1}}=w_m\cdot (\delta_4(r))_{\circ}\nonumber,\\
&\label{xxsec4.s2e5}-\frac{\partial w_d}{\partial  y_1^{p^s}}y_1^{p^{r+s+1}}+\frac{\partial w_d}{\partial  y_2^{p^s}}y_2^{p^{r+s+1}}
+\frac{\partial w_d}{\partial  y_3^{p^s}}2y_3^{p^{r+s+1}}+\frac{\partial w_d}{\partial  y_6^{p^s}}y_6^{p^{r+s+1}} \\&-\frac{\partial w_d}{\partial  y_7^{p^s}}y_7^{p^{r+s+1}}-\frac{\partial w_d}{\partial  y_8^{p^s}}2y_8^{p^{r+s+1}}=w_m\cdot (\delta_5(r))_{\circ}\nonumber,\\
&\label{xxsec4.s2e6}-\frac{\partial w_d}{\partial  y_1^{p^s}}y_4^{p^{r+s+1}}+\frac{\partial w_d}{\partial  y_2^{p^s}}y_3^{p^{r+s+1}}
+\frac{\partial w_d}{\partial  y_4^{p^s}}2y_6^{p^{r+s+1}} -\frac{\partial w_d}{\partial  y_5^{p^s}}y_6^{p^{r+s+1}}\\&-\frac{\partial w_d}{\partial  y_8^{p^s}}y_7^{p^{r+s+1}}=w_m\cdot (\delta_6(r))_{\circ}\nonumber,\\
&\label{xxsec4.s2e7}\frac{\partial w_d}{\partial  y_1^{p^s}}y_8^{p^{r+s+1}}-\frac{\partial w_d}{\partial  y_2^{p^s}}(y_4^{p^{r+s+1}}+y_5^{p^{r+s+1}})
-\frac{\partial w_d}{\partial  y_3^{p^s}}y_6^{p^{r+s+1}} +\frac{\partial w_d}{\partial  y_4^{p^s}}y_7^{p^{r+s+1}}\\&+\frac{\partial w_d}{\partial  y_5^{p^s}}y_7^{p^{r+s+1}}=w_m\cdot (\delta_7(r))_{\circ}\nonumber,\\
&\label{xxsec4.s2e8}-\frac{\partial w_d}{\partial  y_2^{p^s}}y_1^{p^{r+s+1}}
-\frac{\partial w_d}{\partial  y_3^{p^s}}y_5^{p^{r+s+1}}-\frac{\partial w_d}{\partial  y_4^{p^s}}y_8^{p^{r+s+1}} +\frac{\partial w_d}{\partial  y_5^{p^s}}2y_8^{p^{r+s+1}}\\&+\frac{\partial w_d}{\partial  y_6^{p^s}}y_7^{p^{r+s+1}}=w_m\cdot (\delta_8(r))_{\circ}\nonumber.
\end{eqnarray}

Before continuing our proof,  we need to state two further claims, which are established in below:

\begin{claim}\label{xxsec4.c5}For each $\frac{\partial w_d}{\partial  y_k^{p^s}}(k=1,2,\cdots,8)$,
there exist the following divisible relations $w_m|\frac{\partial w_d}{\partial  y_k^{p^s}}(k=1,2,\cdots,8)$.
\end{claim}

\begin{proof}Suppose that on the contradictory  $w_m\nmid\frac{\partial w_d}{\partial  y_1^{p^s}}$.
Let us choose a positive integer $r\gg0$. On the one hand, by (\ref{xxsec4.s2e3}) we know that
$$
\begin{aligned}
&-\frac{\partial w_d}{\partial  y_1^{p^s}}y_2^{p^{r+s+1}}+\frac{\partial w_d}{\partial  y_4^{p^s}}y_3^{p^{r+s+1}}-\frac{\partial w_d}{\partial  y_5^{p^s}}2y_3^{p^{r+s+1}}
+\frac{\partial w_d}{\partial  y_7^{p^s}}y_6^{p^{r+s+1}}+\frac{\partial w_d}{\partial  y_8^{p^s}}y_5^{p^{r+s+1}}\\&=w_m \sum_{i=2,3,5,6}U_3^i(y_1, y_2, \cdots,  y_8)y_i^{p^{r}},
\end{aligned}
$$
where $ \sum_{i=2,3,5,6}U_3^i(y_1, y_2, \cdots,  y_8)y_i^{p^{r}}= (\delta_3(r))_{\circ}$. On the other hand, using (\ref{xxsec4.s2e6}), we have
$$
\begin{aligned}
&-\frac{\partial w_d}{\partial  y_1^{p^s}}y_4^{p^{r+s+1}}+\frac{\partial w_d}{\partial  y_2^{p^s}}y_3^{p^{r+s+1}}
+\frac{\partial w_d}{\partial  y_4^{p^s}}2y_6^{p^{r+s+1}} -\frac{\partial w_d}{\partial  y_5^{p^s}}y_6^{p^{r+s+1}} -\frac{\partial w_d}{\partial  y_8^{p^s}}y_7^{p^{r+s+1}}\\&=w_m \sum_{i=3,4,6,7}U_6^i(y_1, y_2, \cdots,  y_8)y_i^{p^{r}},
\end{aligned}
$$
where $\sum_{i=3,4,6,7}U_6^i(y_1, y_2, \cdots,  y_8)y_i^{p^{r}}=(\delta_6(r))_{\circ}$. Comparing the coefficients of $y_2^{p^{r}}$ and $y_4^{p^{r}}$ in the two sides of the above two equalities, we infer that
$w_m|\frac{\partial w_d}{\partial  y_1^{p^s}}y_2^{p^{r+s+1}-p^r}$ and $w_m|\frac{\partial w_d}{\partial  y_1^{p^s}}y_4^{p^{r+s+1}-p^r}$, respectively.
Note that our hypothesis implies that there exists an irreducible polynomial $f$ such that
$f^{\beta}\nmid \frac{\partial w_d}{\partial  y_1^{p^s}}$ with  $f^{\beta}| w_m$ for some positive integer $\beta$, so $f$ is a common divisor of
$y_2^{{p^{r+s+1}-p^r}}$ and $y_4^{{p^{r+s+1}-p^r}}$, which is a contradiction. Thus $w_m|\frac{\partial w_d}{\partial  y_1^{p^s}}$.
A similar argument shows that  $w_m|\frac{\partial w_d}{\partial  y_k^{p^s}}(k=2, 3, 6, 7, 8)$.

Now (\ref{xxsec4.s2e1})-(\ref{xxsec4.s2e8}) degenerate into
\begin{eqnarray}
 & \label{xxsec4.s2e9}-2\frac{\partial w_d}{\partial  y_4^{p^s}}y_1^{p^{r+s+1}}+\frac{\partial w_d}{\partial  y_5^{p^s}}y_1^{p^{r+s+1}}=w_m\cdot (\delta_1(r))_{\circ} , \\
&\label{xxsec4.s2e10}-\frac{\partial w_d}{\partial  y_4^{p^s}}y_2^{p^{r+s+1}}-\frac{\partial w_d}{\partial  y_5^{p^s}}y_2^{p^{r+s+1}}=w_m\cdot (\delta_2(r))_{\circ} , \\
&\label{xxsec4.s2e11}\frac{\partial w_d}{\partial  y_4^{p^s}}y_3^{p^{r+s+1}}-\frac{\partial w_d}{\partial  y_5^{p^s}}2y_3^{p^{r+s+1}}=w_m\cdot (\delta_3(r))_{\circ} , \\
&\label{xxsec4.s2e12}\frac{\partial w_d}{\partial  y_4^{p^s}}2y_6^{p^{r+s+1}}-\frac{\partial w_d}{\partial  y_5^{p^s}}y_6^{p^{r+s+1}}=w_m\cdot (\delta_6(r))_{\circ} , \\
&\label{xxsec4.s2e13}\frac{\partial w_d}{\partial  y_4^{p^s}}y_7^{p^{r+s+1}}+\frac{\partial w_d}{\partial  y_5^{p^s}}y_7^{p^{r+s+1}}=w_m\cdot (\delta_7(r))_{\circ} , \\
&\label{xxsec4.s2e14}-\frac{\partial w_d}{\partial  y_4^{p^s}}y_8^{p^{r+s+1}}+\frac{\partial w_d}{\partial  y_5^{p^s}}2y_8^{p^{r+s+1}}=w_m\cdot (\delta_8(r))_{\circ} ,
\end{eqnarray}
respectively. If $p\neq3$, it is a easy to verify  $w_m|(\frac{\partial w_d}{\partial  y_4^{p^s}}+\frac{\partial w_d}{\partial  y_5^{p^s}})$
and $ w_m|(2\frac{\partial w_d}{\partial  y_4^{p^s}}-\frac{\partial w_d}{\partial  y_5^{p^s}})$. And hence
$w_m|\frac{\partial w_d}{\partial  y_4^{p^s}}$ and $w_m|\frac{\partial w_d}{\partial  y_5^{p^s}}$, as desired.
As to the case of $p=3$, the results are actually the same, the further discussion are omitted here.
\end{proof}

\begin{claim}\label{xxsec4.c6}For $w_m, w_d$, there exist $u \in\mathbb{F}_p[y_1^{p^s}
, y_2^{p^s}, \cdots, y_8^{p^s} ]$ and $
v \in\mathbb{F}_p[y_1^{p^{s+1}}$
, $y_2^{p^{s+1}}$,  $\cdots$,  $y_8^{p^{s+1}}]$ such that $w_d=w_mu+v$.
\end{claim}

\begin{proof}
Claim \ref{xxsec4.c5} tells us that there exist  $u_k\in \mathbb{F}_p[y_1^{p^s}
, y_2^{p^s}, \cdots, y_8^{p^s} ]$ such that $\frac{\partial w_d}{\partial  y_k^{p^s}}=w_mu_k(k=1,2,\cdots,8)$. Let $u_k(k=1,2,\cdots,8)$ be of the following
forms:
$$
 u_k =\sum_{ i_1=0}^{p-1}\sum_{ i_2=0}^{p-1}\cdots\sum_{  i_8=0}^{p-1}(y_1^{p^s})^{i_1}(y_2^{p^s})^{i_2}\cdots (y_8^{p^s})^{ i_8}g_{i_1i_2\cdots i_8}^k(y_1^{p^{s+1}},y_2^{p^{s+1}},\cdots, y_8^{p^{s+1}}).
$$
Then for $k=1$, we have
 $$\begin{aligned}
\frac{\partial w_d}{\partial  y_1^{p^s}}&=w_m\sum_{ i_1=0}^{p-1}\sum_{ i_2=0}^{p-1}\cdots\sum_{  i_8=0}^{p-1}(y_1^{p^s})^{i_1}(y_2^{p^s})^{i_2}\cdots (y_8^{p^s})^{ i_8}\\& \ \ \ \ \times g_{i_1i_2\cdots i_8}^1(y_1^{p^{s+1}},y_2^{p^{s+1}},\cdots, y_8^{p^{s+1}}).
\end{aligned}
$$
On the other hand, it follows from (\ref{xxsec4.c2e2}) that
 $$\begin{aligned}
\frac{\partial w_d}{\partial  y_1^{p^s}}&=\sum_{ i_1=1}^{p-1}\sum_{ i_2=0}^{p-1}\cdots\sum_{  i_8=0}^{p-1}i_1(y_1^{p^s})^{i_1-1}(y_2^{p^s})^{i_2}\cdots (y_8^{p^s})^{ i_8}\\& \ \ \ \ \times h_{i_1i_2\cdots i_8}(y_1^{p^{s+1}},y_2^{p^{s+1}},\cdots, y_8^{p^{s+1}})\\
&=\sum_{ i_1=0}^{p-2}\sum_{ i_2=0}^{p-1}\cdots\sum_{  i_8=0}^{p-1}(i_1+1)(y_1^{p^s})^{i_1}(y_2^{p^s})^{i_2}\cdots (y_8^{p^s})^{ i_8}\\& \ \ \ \ \times h_{(i_1+1)i_2\cdots i_8}(y_1^{p^{s+1}},y_2^{p^{s+1}},\cdots, y_8^{p^{s+1}}).
\end{aligned}
$$
Comparing the last two relations, we see that
$$\begin{aligned}
&w_mg_{i_1i_2\cdots i_8}^1(y_1^{p^{s+1}},y_2^{p^{s+1}},\cdots, y_8^{p^{s+1}})\\
&=(i_1+1)h_{(i_1+1)i_2\cdots i_8}(y_1^{p^{s+1}},y_2^{p^{s+1}},\cdots, y_8^{p^{s+1}}).
\end{aligned}
$$
Similarly, we also get
\begin{eqnarray}
&w_mg_{i_1i_2\cdots i_8}^2(y_1^{p^{s+1}},y_2^{p^{s+1}},\cdots, y_8^{p^{s+1}})\nonumber\\
&=(i_2+1)h_{i_1(i_2+1)\cdots i_8}(y_1^{p^{s+1}},y_2^{p^{s+1}},\cdots, y_8^{p^{s+1}})\nonumber,\\
&\qquad\qquad\qquad\vdots\nonumber\\
&w_mg_{i_1i_2\cdots i_8}^8(y_1^{p^{s+1}},y_2^{p^{s+1}},\cdots, y_8^{p^{s+1}})\nonumber\\
&=( i_8+1)h_{i_1i_2\cdots( i_8+1)}(y_1^{p^{s+1}},y_2^{p^{s+1}},\cdots, y_8^{p^{s+1}})\nonumber.
\end{eqnarray}
This shows that
$$w_m|h_{i_1i_2\cdots  i_8 }(y_1^{p^{s+1}},y_2^{p^{s+1}},\cdots, y_8^{p^{s+1}}),$$ where $i_1, i_2, \cdots, i_8$ are not complete zeroes. That is, for each $h_{i_1i_2\cdots  i_8 }(y_1^{p^{s+1}}$, $y_2^{p^{s+1}}$, $\cdots$, $y_8^{p^{s+1}})$, there exists a corresponding $h_{i_1i_2\cdots i_8}^{*}$ such that
\begin{equation}\label{xxsec4.c2e9}h_{i_1i_2\cdots i_8}(y_1^{p^{s+1}},y_2^{p^{s+1}},\cdots, y_8^{p^{s+1}})=w_mh_{i_1i_2\cdots i_8}^{*},\ i_1\geq1  \ \text{or} \  i_2\geq1\cdots \ \text{or} \  i_8\geq1.\end{equation}
 Taking   (\ref{xxsec4.c2e9}) into (\ref{xxsec4.c2e2}) yields
$$\begin{aligned}
&w_d=w_m\sum_{ i_1=0}^{p-1}\sum_{ i_2=0}^{p-1}\cdots\sum_{  i_8=0}^{p-1}(y_1^{p^s})^{i_1}(y_2^{p^s})^{i_2}\cdots (y_8^{p^s})^{ i_8}h_{i_1i_2\cdots i_8}^{*}\\&{\phantom{w_d=}}+h_{00\cdots0} (y_1^{p^{s+1}},y_2^{p^{s+1}},\cdots, y_8^{p^{s+1}}),\end{aligned}
$$
where $i_1, i_2, \cdots, i_8$ are not all zero.
Let us write
$$\begin{aligned}&u=\sum_{ i_1=0}^{p-1}\sum_{ i_2=0}^{p-1}\cdots\sum_{  i_8=0}^{p-1}(y_1^{p^s})^{i_1}(y_2^{p^s})^{i_2}\cdots (y_8^{p^s})^{ i_8}h_{i_1i_2\cdots i_8}^{*}, \\&  v= h_{00\cdots0} (y_1^{p^{s+1}},y_2^{p^{s+1}},\cdots, y_8^{p^{s+1}}),\end{aligned}$$
where $i_1, i_2, \cdots, i_8$ are not all zero.   Then  $w_d=w_mu+v$, where $u \in\mathbb{F}_p[y_1^{p^s}$
, $y_2^{p^s}$, $\cdots$, $y_8^{p^s} ]$ and $
v \in\mathbb{F}_p[y_1^{p^{s+1}}
, y_2^{p^{s+1}},  \cdots ,  y_8^{p^{s+1}} ]$. The result follows.
\end{proof}

We now continue to proceed our proof. Let us consider the following set of $\Omega_G$ as
$$\begin{aligned}N(w_m) =& \{\ W\ |\ W \ \text{is a nontrivial normal element  }\\&   \text{with the lowest degree term}\   w_m, s(W)= s_m-1\ \},
\end{aligned}
$$
 where $s(W)$ is the $s$ corresponding to
$W$.
For any $W \in N(w_m)$, we assume that $s(W)=s_d$  for some $d > m$. Thus one can
write $W$ as
$$W = w_m + w_{m+1} + w_{m+2}+ \cdots+ w_d + \cdots. $$
Then by Claim \ref{xxsec4.c6} we have $w_d=w_mu+v$,
 where $$u \in\mathbb{F}_p[y_1^{p^{ s_m-1}}
, y_2^{p^{ s_m-1}}, \cdots, y_8^{p^{ s_m-1}} ],\
v \in\mathbb{F}_p[y_1^{p^{ s_m}}
, y_2^{p^{ s_m}},  \cdots ,  y_8^{p^{ s_m}} ].$$ For convenience, we denote the index of $w_d$ by $d(W)$.
Let us write $W = W_0$ and $W_1 = W(1 - u)$. Then
$$\begin{aligned}
&W_1= w_m + w_{m+1} + w_{m+2} + \cdots + (w_d  - w_mu) + (w_{d+1}  - w_{m+1}u)\\
&\ \ \ \ \ \ +(w_{d+2}- w_{m+2}u) + \cdots  + (w_{2d-m}- w_du) + \cdots.
\end{aligned}
$$
It is easy to verfiy that $W_1 \in N(w_m)$ and $d(W_0) < d(W_1)$.
Likewise, for $ W_1$,
there exist $u'$ and $v'$ such that $w_{1d} = w_mu'+v'$, where
$w_{1d}$ is the first homogeneous
polynomial satisfying the condition  $s(W_1)= s_m-1$ in $W_1$. We set $W_2 = W_1(1 - u')$. It
is also easy to check that
$W_2 \in N(w_m)$ and $d(W_1) < d(W_2)$. Repeating this
process continuously, we finally construct an infinite sequence of normal elements
$$W_0 = W, \ W_1 = W(1 - u),  \  W_2 = W(1 - u)(1 - u'),\  \cdots.$$
Let us set ${\rm lim}_{n\rightarrow \infty}W_n = V$. Then $V$  is a normal element with the form
$$V =  v_m + v_{m+1} +\cdots  v_{d-1} + v_{d}+ \cdots ,$$
where $v_m=w_m$. It follows that $s(V)> s_{m-1}$, a contradiction. This shows that $W$ is not a nontrivial normal  element of  $\Omega_G$ under  the case of $s<s_m$.
\end{proof}

\begin{remark}
We would like to point out that the current computational method can be used to
discuss the normal elements of the completed group algebra $\Omega_G$ over
$G={\Gamma_1(\rm SL}_2(\mathbb{Z}_p))$. Conversely, the adopted method of \cite{WeiBian1}
can not be adapted to the current situation. One distinguished difference can be
observed by comparing the proof of Claim \ref{xxsec4.c5} with that of Claim 11 of \cite{WeiBian1}.
\end{remark}

\section{Topics for Further Research}
\label{xxsec5}

As you known, the main purpose of the current article is to study normal elements of
a completed group algebra over the special linear group ${\rm SL}_3(\mathbb{Z}_p)$.
Those analogous questions on completed group algebras defined over other $p$-adic
groups also have great interest and draw more people's attention.
In this section, we will present several potential topics for future further research.
Motivated by our current work, Clozel's systematic work \cite{Clozel1, Clozel2, Clozel3}
and  Ray's papers \cite{Ray1, Ray2} , it is natural to propose several questions in this line.

For a few small $p$, there are some extra difficulties and challenges to compute normal elements of
completed group algebras over ${\rm SL}_n(\mathbb{Z}_p)$. For example,
in the case of $p=2$, $G=\Gamma_1({\rm SL}_2(\mathbb{Z}_p))$ will have $p$-torsion and thus
its completed group algebra is not an integral domain which prevents one
from using deep results of Lazard \cite{Lazard}. Although we exclude these
primes from consideration in the stage, we strongly believe that we should say
much more about the normal elements and ideals of the completed group algebra $\Omega_G$.

\begin{question}\label{xxsec5.1}
Let $G=\Gamma_1({\rm SL}_n(\mathbb{Z}_2))$ be the first congruence kernel of ${\rm SL}_n(\mathbb{Z}_2)$
and $\Omega_G$ be its completed group algebra over $\mathbb{F}_p$.
Are there any non-trivial normal elements $\Omega_G$ ?
\end{question}

One much more common question is as the following:

\begin{question}\label{xxsec5.2}
Let $G=\Gamma_1({\rm SL}_n(\mathbb{Z}_p))(n>3)$ and $\Omega_G$ be its
completed group algebra over ${\mathbb{F}}_p$. Are there any non-trivial normal
elements in $\Omega_G$ ?
\end{question}

Question \ref{xxsec5.2} will involve rather complicated and tedious computations. In particular, when $p$ is a divisor
of $n$, we have not found a reasonable approach to this question.

Let $G$ be a semi-simple, simply connected Chevalley group over $\mathbb{Z}_p$
and $G(\mathbb{Z}_p)$ be its $\mathbb{Z}_p$-points. Under a faithful
representation of group schemes $\rho: G\hookrightarrow {\rm GL}_n$ over $\mathbb{Z}$,
one can define, for each $k\in \mathbb{N}$, $\Gamma(k):={\rm ker}({\rm GL}_n(\mathbb{Z}_p)\longrightarrow  {\rm GL}_n(\mathbb{Z}_p/p^k\mathbb{Z}_p))$
(the $\mathbb{Z}$-structure on ${\rm GL}_n$ being given by $V_{\mathbb Z}$) and $G(k):=G(\mathbb{Z}_p)\cap \Gamma(k)$.
Then $G(k)$ is called the \textit{$k$-th congruence kernel} of $G(\mathbb{Z}_p)$ which satisfies a
descending filtration $G(1)\supseteq G(2) \supseteq G(3) \supseteq \cdots $.
Ray \cite{Ray1} give an explicit presentation (by generators and relations) of the completed group algebra
for the first congruence kernel of a semi-simple, simply connected Chevalley group over $\mathbb{Z}_p$,
extending the proof given by Clozel for the group $\Gamma_1({\rm SL}_2(\mathbb{Z}_p))$,
the first congruence kernel of ${\rm SL}_2(\mathbb{Z}_p)$ for primes $p>2$. This
immediately gives rise to the following question.

\begin{question}\label{xxsec5.3}
Let $G$ be a semi-simple, simply connected Chevalley group over $\mathbb{Z}_p$, $G(1)$ be
the first congruence kernel of $G(\mathbb{Z}_p)$ and $\Omega_{G(1)}$ be its
completed group algebra over ${\mathbb{F}}_p$. Are there any non-trivial normal
elements in $\Omega_{G(1)}$ ?
\end{question}

For a prime $p>n+1$, Ray \cite{Ray2} determine explicitly the presentation in the form
of generators and relations of the completed group algebras $\Lambda_G$ and $\Omega_G$ over the pro-$p$
Iwahori subgroup $G$ of ${\rm GL}_n(\mathbb{Z}_p)$. Let $G$ be the pro-$p$ Iwahori
subgroup of ${\rm GL}_n(\mathbb{Z}_p)$,  i.e. $G$ is the group of matrices in ${\rm GL}_n(\mathbb{Z}_p)$
which are upper unipotent modulo the maximal ideal $p\mathbb{Z}_p$ of $\mathbb{Z}_p$. It is
natural to form the following conjecture.

\begin{question}\label{xxsec5.4}
Let $G$ be the pro-$p$ Iwahori subgroup of ${\rm GL}_n(\mathbb{Z}_p)$ and $\Omega_G$ be its
completed group algebra over ${\mathbb{F}}_p$. Are there any non-trivial normal elements
in $\Omega_G$ ?
\end{question}

\end{document}